\newcommand{\tr}{\textnormal{Tr}}
\newcommand{\braket}[2]{\langle #1 , #2 \rangle}
\def\beq{\begin{equation}}
\def\eeq{\end{equation}}
\def\bq{\begin{quote}}
\def\eq{\end{quote}}
\def\ben{\begin{enumerate}}
\def\een{\end{enumerate}}
\def\bit{\begin{itemize}}
\def\eit{\end{itemize}}
\def\ra{\rightarrow}
\def\lset{\lbrace}
\def\rset{\rbrace}
\def\r|{\right|}
\def\ident{\textnormal{id}}
\newcommand\C{\mathbf{C}}
\newcommand\R{\mathbf{R}}
\newcommand\N{\mathbf{N}}
\newcommand\E{\mathbf{E}}
\newcommand{\scalar}[2]{\langle#1,#2\rangle}
\theoremstyle{plain}
\newtheorem{thm}{Theorem}[section]
\newtheorem{lem}[thm]{Lemma}
\newtheorem{cor}[thm]{Corollary}
\newtheorem{prop}[thm]{Proposition}
\newtheorem{conj}[thm]{Conjecture}
\theoremstyle{definition}
\newcommand{\iy}{\infty}
\newcommand{\e}{\varepsilon}
\renewcommand{\epsilon}{\varepsilon} 
\renewcommand{\leq}{\leqslant}
\renewcommand{\geq}{\geqslant}
\begin{document}

\title{Limit formulas for norms of tensor power operators}

\author{Guillaume Aubrun}
\address{\small{Institut Camille Jordan, Universit\'{e} Claude Bernard Lyon 1, 43 boulevard du 11 novembre 1918, 69622 Villeurbanne cedex, France}}
\email{aubrun@math.univ-lyon1.fr}
\author{Alexander M\"uller-Hermes}
\address{\small{Department of Mathematics, University of Oslo, P.O. box 1053, Blindern, 0316 Oslo, Norway}}
\email{muellerh@math.uio.no, muellerh@posteo.net}


\date{\today}
\begin{abstract}
Given an operator $\phi:X\ra Y$ between Banach spaces, we consider its tensor powers $\phi^{\otimes k}$ as operators from the $k$-fold injective tensor product of~$X$ to the $k$-fold projective tensor product of $Y$. We show that after taking the $k$th root, the operator norm of $\phi^{\otimes k}$ converges to the $2$-dominated norm $\gamma^*_2(\phi)$, one of the standard operator ideal norms. 
\end{abstract}
\maketitle

\section{Introduction}

Let $\phi : X \to Y$ denote a bounded operator between Banach spaces. We denote by $\left\|  \phi^{\otimes k}  \right\|_{\e \to \pi}$ the operator norm of the map~$\phi^{\otimes k}$ when the domain is $X^{\otimes_\e k}$, the algebraic tensor product equipped with the injective norm, and the range is $Y^{\otimes_\pi k}$, the algebraic tensor product equipped with the projective norm. Our main contribution is to prove the formula
\begin{equation} \label{eq:single-letter-formula} \lim_{k \to \iy} \left\| \phi^{\otimes k}  \right\|_{\e \to \pi}^{1/k} = \gamma^*_2(\phi). \end{equation}
Here, $\gamma_2^*$ is the $2$-dominated norm, which is the adjoint operator ideal norm to~$\gamma_2$, the factorization norm through a Hilbert space. We take the convention that the norm of an unbounded operator equals $+\iy$. In particular, the limit in~\eqref{eq:single-letter-formula} is finite if and only if the operator $\phi$ is $2$-dominated.

The process of taking the limit such as in \eqref{eq:single-letter-formula} is called a \emph{regularization} in quantum information theory (see for example~\cite[Section 8]{watrous2018theory}). It is often unclear how to compute such regularizations directly, and a \emph{single-letter formula} such as~\eqref{eq:single-letter-formula}, i.e., a formula only depending on the map~$\phi$ itself and not its tensor powers, is highly desirable. In our situation, it allows us to answer a number of questions left open in our previous paper~\cite{paperB}. The classes of operators $\phi$ for which $\|\phi^{\otimes k}\|_{\e \to \pi} < \iy$ for a fixed $k$ have previously been studied by K.~John (see for example \cite{John88}).

As a key step towards proving \eqref{eq:single-letter-formula}, we show another single-letter formula. Consider a bounded operator $\phi : X \to H$ between a Banach space $X$ and a Hilbert space $H$. Denote by $\| \phi^{\otimes k}\|_{\e \to h}$ the norm of the map $\phi^{\otimes k}$ from $X^{\otimes_\e k}$ to $H^{\otimes_h k}$, the space $H^{\otimes k}$ equipped with the Hilbert space tensor product structure. We prove the formula
\begin{equation} \label{eq:single-letter-formula-pi2} \lim_{k \to \iy} \left\| \phi^{\otimes k}  \right\|_{\e \to h}^{1/k} = \pi_2(\phi), \end{equation}
where $\pi_2$ is the $2$-summing norm.

\section{Preliminaries on tensor norms and operator ideals}

Let $X$ and $Y$ be real or complex Banach spaces. The closed unit ball of $X$ is denoted by $B_X$. Given a linear operator $\phi : X \to Y$, we denote its norm by $\|\phi : X \to Y\|$, and we write simply $\|\phi\|$ when there is no ambiguity on the norm used in the domain and in the range. We take the convention that $\|\phi\|=+\iy$ if $\phi$ is not bounded. Similar conventions apply to all the norms introduced later. 

\subsection{Tensor norms}

Let $X\otimes Y$ denote the algebraic tensor product of the Banach spaces $X$ and $Y$. For $z\in X\otimes Y$, the \emph{projective norm} is given by
\[
\|z\|_\pi = \inf\lset \sum^n_{i=1} \|x_i\|\|y_i\|~:~n\in\N,\, z=\sum^n_{i=1} x_i\otimes y_i \rset ,
\]
and the \emph{injective norm} is given by 
\[
\|z\|_\epsilon = \sup\lset |\braket{\alpha\otimes \beta}{z}|~:~\alpha\in B_{X^*},\, \beta\in B_{Y^*}\rset .
\]
We denote by $X \otimes_\pi Y$ and $X \otimes_\e Y$ the normed spaces $(X \otimes Y, \|\cdot\|_\pi)$ and $(X \otimes Y, \|\cdot\|_\e)$. The injective and projective tensor products are associative and allow us to define inductively $X^{\otimes_\e k}$ and $X^{\otimes_\pi k}$ for any integer $k \geq 1$. If $H_1$, $H_2$ are Hilbert spaces, we denote by $H_1 \otimes_h H_2$ the space $H_1 \otimes H_2$ with the canonical inner product, and by $H_1^{\otimes_h k}$ the $k$-fold Hilbert space tensor product. 

We often use the following property called the metric mapping property: given bounded operators $\phi_1 : X_1 \to Y_1$ and $\phi_2 : X_2 \to Y_2$, then for $\alpha = \pi$ or $\alpha=\e$ (or $\alpha=h$ if the involved spaces are Hilbert spaces), the map
\[ \phi_1 \otimes \phi_2 : X_1 \otimes_\alpha X_2 \to Y_1 \otimes_\alpha Y_2\]
is bounded and satisfies $\|\phi_1 \otimes \phi_2\| = \|\phi_1\| \cdot \|\phi_2\|$.

\subsection{2-summing norm}

A bounded operator $\phi : X \to Y$ is said to be $2$-summing if there exists a constant $C$ such that the inequality
\[ \left( \sum_{i} \|\phi(x_i)\|^2 \right)^{1/2} \leq C \sup_{f \in B_{X^*}} \left( \sum_i |f(x_i)|^2 \right)^{1/2} \]
holds for every finite subset $(x_i)$ of $X$. The smallest such $C$ is called the $2$-summing norm of $\phi$, denoted by $\pi_2(\phi)$. If an operator $\phi$ is not $2$-summing, it is understood that $\pi_2(\phi)=+\iy$.

The class of $2$-summing operators between Hilbert spaces coincides with the class of Hilbert--Schmidt operators, and we have $\pi_2(\phi) = \mathrm{hs}(\phi)$, the Hilbert-Schmidt norm, for every operator~$\phi$ between Hilbert spaces (see~\cite[Theorem 4.10]{DJT95}). We also use the following property of the $2$-summing norm 
\cite[Proposition~2.7]{DJT95}: given a Hilbert space~$H$ and a bounded operator $\phi : X \to H$, we have 
\begin{equation} \label{eq:pi2-as-a-sup} \pi_2(\phi) = \sup \{ \mathrm{hs}(\phi \lambda) ~:~ \lambda : H \to X,\ \|\lambda\| \leq 1 \}.\end{equation}
The Pietsch factorization theorem (see \cite[Theorem 2.13 and Corollary~2.16]{DJT95}) shows that a bounded operator $\phi : X \to Y$ is $2$-summing if and only if it factors as 
\begin{equation}\label{equ:2NucFact} \phi : X \stackrel{\alpha}{\longrightarrow} L^{\iy}(\mu) \stackrel{i_\mu}{\longrightarrow} L^2(\mu) \stackrel{\beta}{\longrightarrow} Y , \end{equation}
where $(\Omega,\mu)$ is a probability space, $\alpha$, $\beta$ are bounded and $i_\mu : L^{\iy}(\mu) \to L^2(\mu)$ is the canonical embedding. Moreover, $\pi_2(\phi)$ equals the infimum of the products $\|\alpha\| \cdot \|\beta\|$ over factorizations as in \eqref{equ:2NucFact}. 

\subsection{The Hilbert space factorization norm and its trace dual}

A bounded operator $\phi:X\ra Y$ factoring as
\begin{equation}\label{equ:HFact} \phi : X \stackrel{\phi_1}{\longrightarrow} H \stackrel{\phi_2}{\longrightarrow} Y , \end{equation}
with a Hilbert space $H$ and bounded operators $\phi_1:X\ra H$ and $\phi_2:H\ra Y$ is said to factor through a Hilbert space. The Hilbert space factorization norm of such an operator is defined as
\[
\gamma_2(\phi) = \inf\lset \|\phi_1\| \cdot \|\phi_2 \|~:~\phi=\phi_2\phi_1\rset ,
\] 
where the infimum is over factorizations as in \eqref{equ:HFact}. If an operator $\phi$ does not factor through a Hilbert space, it is understood that $\gamma_2(\phi)=+\infty$.

A bounded operator $\phi : X \to Y$ is said to be $2$-dominated if it factors as in \eqref{equ:HFact} with a Hilbert space $H$ and with $\phi_1:X\ra H$ and $\phi^*_2:Y^*\ra H^*$ being $2$-summing. The $2$-dominated norm of such an operator is defined as
\[
\gamma_2^*(\phi) = \inf\lset \pi_2(\phi_1)\pi_2(\phi^*_2)~:~\phi=\phi_2\phi_1\rset ,
\] 
where the infimum is over factorizations as in \eqref{equ:HFact}. Again, if an operator $\phi$ is not $2$-dominated it is understood that $\gamma_2^*(\phi)=+\infty$.

We first state the duality which links $\gamma_2$ and $\gamma_2^*$ (see \cite[Theorem 7.11.]{DJT95}) in the case of a linear operator $\phi:X \to Y$ between finite-dimensional normed spaces. In that case, we have 
\[ \gamma_2^*(\phi) = \sup \{ |\tr (\phi \psi)| ~:~ \psi : Y \to X,\ \gamma_2(\psi) \leq 1 \} .\]
Using cyclicity of the trace, this formula can be rewritten as
\begin{equation}
\label{eq:trace-duality}
\gamma_2^*(\phi) = \sup \{ |\tr (\beta \phi \alpha)|
\} \end{equation}
where the supremum is over finite-dimensional Hilbert spaces $H$ and maps $\alpha : H \to X$ and $\beta : Y \to H$ of norm less than $1$.

For bounded operators between general Banach spaces, the formula \eqref{eq:trace-duality} must be modified (see~\cite[Section 10]{DJT95}). For a bounded operator $\phi : X \to Y$ between Banach spaces $X$ and $Y$ we have
\begin{equation}
\label{eq:trace-duality-infdim}
\gamma_2^*(\phi)=\sup\lset |\tr(\psi_1\beta\phi\alpha \psi_2)| \rset
\end{equation}
where the supremum is over finite-dimensional normed spaces $E$ and $F$, finite-dimensional Hilbert spaces $H$, and operators $\alpha:E\ra X$, $\beta:Y\ra F$, $\psi_1:F\ra H$ and $\psi_2:H\ra E$ all with norm less than $1$. 

\section{Proof of the main identities in finite dimensions}

In this section, we present the proof of the main identities in the case of finite-dimensional spaces. The infinite-dimensional case, which requires minor modifications, is covered in Section \ref{sec:infinite-dimension}.

\subsection{Injective-to-Hilbertian regularization}

We start by proving a stability property of the $2$-summing norm under tensor products.

 \begin{prop} \label{prop:pi2-tensorstable}
 Let $X_1$, $X_2$ be normed spaces, $H_1$, $H_2$ be Hilbert spaces, all of finite dimension. For every operators $\phi_1:X_1 \to H_1$ and $\phi_2 : X_2 \to H_2$, the operator
 \[
 \phi_1\otimes \phi_2: X_1\otimes_\epsilon X_2\ra H_1\otimes_h H_2
 \]
 satisfies $\pi_2(\phi_1 \otimes \phi_2)=\pi_2(\phi_1)\pi_2(\phi_2)$.
 \end{prop}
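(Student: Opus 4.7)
My plan is to establish the two inequalities $\pi_2(\phi_1\otimes\phi_2) \leq \pi_2(\phi_1)\pi_2(\phi_2)$ and $\pi_2(\phi_1\otimes\phi_2)\geq \pi_2(\phi_1)\pi_2(\phi_2)$ separately.

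For the upper bound, I would use Pietsch factorization \eqref{equ:2NucFact}. Given $\delta>0$, pick factorizations $\phi_i = \beta_i \circ i_{\mu_i} \circ \alpha_i$, $i=1,2$, with $\|\alpha_i\|\cdot\|\beta_i\| \leq \pi_2(\phi_i)+\delta$ and $\mu_i$ a probability measure on some finite set (since we are in finite dimensions). I would then exploit the canonical isometric identifications
\[
L^\infty(\mu_1)\otimes_\epsilon L^\infty(\mu_2) = L^\infty(\mu_1\otimes\mu_2) \quad\text{and}\quad L^2(\mu_1)\otimes_h L^2(\mu_2) = L^2(\mu_1\otimes\mu_2),
\]
which allow $\phi_1\otimes\phi_2$ to be written as the composition $(\beta_1\otimes\beta_2)\circ i_{\mu_1\otimes\mu_2}\circ (\alpha_1\otimes\alpha_2)$ with $\mu_1\otimes\mu_2$ again a probability measure. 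Since $\pi_2(i_{\mu_1\otimes\mu_2})=1$ and the outer maps have operator norms controlled by the metric mapping property, the ideal property of $\pi_2$ yields $\pi_2(\phi_1\otimes\phi_2) \leq \|\alpha_1\|\|\alpha_2\|\|\beta_1\|\|\beta_2\|$, and letting $\delta\to 0$ concludes.

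For the lower bound, I would invoke formula \eqref{eq:pi2-as-a-sup} applied to $\phi_1\otimes\phi_2 : X_1\otimes_\epsilon X_2 \to H_1\otimes_h H_2$. Choose contractions $\lambda_i : K_i\to X_i$ (with $K_i$ Hilbert) nearly saturating $\pi_2(\phi_i) = \sup \mathrm{hs}(\phi_i\lambda_i)$. The tensor product $\lambda_1\otimes\lambda_2$ maps $K_1\otimes_h K_2$ into $X_1\otimes_\epsilon X_2$, and using multiplicativity of the Hilbert--Schmidt norm on tensor products one has
\[
\mathrm{hs}\bigl((\phi_1\otimes\phi_2)(\lambda_1\otimes\lambda_2)\bigr) = \mathrm{hs}(\phi_1\lambda_1)\cdot \mathrm{hs}(\phi_2\lambda_2).
\]
Combining with \eqref{eq:pi2-as-a-sup} (and taking suprema) immediately gives the desired inequality, \emph{provided} I can establish the key norm bound
\[
\|\lambda_1\otimes\lambda_2 : K_1\otimes_h K_2 \to X_1\otimes_\epsilon X_2\| \leq \|\lambda_1\|\cdot \|\lambda_2\|.
\]

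This norm bound is the main technical step. I would prove it using the SVD: every $z\in K_1\otimes K_2$ can be written as $z=\sum_i \sigma_i e_i\otimes f_i$ with $(e_i),(f_i)$ orthonormal and $\|z\|_h = (\sum\sigma_i^2)^{1/2}$. For $\alpha\in B_{X_1^*}$, $\beta\in B_{X_2^*}$, setting $a_i = \alpha(\lambda_1(e_i))$ and $b_i = \beta(\lambda_2(f_i))$, Bessel's inequality applied to the functionals $\alpha\lambda_1, \beta\lambda_2$ (of norm at most $1$) gives $\sum a_i^2 \leq 1$ and $\sum b_i^2 \leq 1$. Two applications of Cauchy--Schwarz, together with the crude bound $\max_i a_i^2 \leq \sum_j a_j^2 \leq 1$, yield $|\sum_i \sigma_i a_i b_i| \leq (\sum_i \sigma_i^2)^{1/2}$, so that $\|(\lambda_1\otimes\lambda_2)(z)\|_\epsilon \leq \|z\|_h$. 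This elementary Bessel--Cauchy--Schwarz estimate is the only real content of the argument; everything else is bookkeeping with the ideal property of $\pi_2$ and the metric mapping property.
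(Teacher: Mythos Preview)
Your proof is correct and follows the same overall strategy as the paper: Pietsch factorization combined with the metric mapping property for the upper bound, and formula \eqref{eq:pi2-as-a-sup} together with multiplicativity of the Hilbert--Schmidt norm for the lower bound.

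The one place you work harder than necessary is the ``key norm bound'' $\|\lambda_1\otimes\lambda_2 : K_1\otimes_h K_2 \to X_1\otimes_\epsilon X_2\|\leq \|\lambda_1\|\,\|\lambda_2\|$. Your SVD/Bessel/Cauchy--Schwarz argument is fine, but the paper dispatches this in one line: since $\|\cdot\|_\epsilon \leq \|\cdot\|_h$ on $K_1\otimes K_2$, one has
\[
\|\lambda_1\otimes\lambda_2 : K_1\otimes_h K_2 \to X_1\otimes_\epsilon X_2\| \leq \|\lambda_1\otimes\lambda_2 : K_1\otimes_\epsilon K_2 \to X_1\otimes_\epsilon X_2\| = \|\lambda_1\|\,\|\lambda_2\|,
\]
the equality being the metric mapping property for $\epsilon$. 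So what you called ``the only real content of the argument'' actually follows immediately from facts already at hand.
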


The choice of tensor norms on domain and range is crucial here: For instance, the $2$-summing norm is not multiplicative, when both the domain and the range are equipped with the projective tensor norms~\cite[Corollary 1, Section 34.1]{defant1992tensor}. 

\begin{proof}
For $j \in \{1,2\}$, consider factorizations $\phi_j=\beta_j i_{\mu_j} \alpha_j$ given as
\[ \phi_j : X_j \stackrel{\alpha_j}{\longrightarrow} L^{\iy}(\mu_j) \stackrel{i_{\mu_j}}{\longrightarrow} L^2(\mu_j) \stackrel{\beta_j} {\longrightarrow} H_j . \]
Let $\mu = \mu_1 \otimes \mu_2$. We identify the completion of $L^2(\mu_1) \otimes_h L^2(\mu_2)$ with $L^2(\mu)$. The canonical map $\kappa : L^{\iy}(\mu_1) \otimes_\e L^{\iy}(\mu_2) \to L^{\iy}(\mu)$ is isometric.

Consider the factorization  of $\phi_1 \otimes \phi_2$ as
\[ 
X_1 \otimes_\e X_2 \stackrel{\alpha_1 \otimes \alpha_2}{\longrightarrow} 
L^{\iy}(\mu_1) \otimes_\e L^{\iy}(\mu_2) \stackrel{\kappa}{\longrightarrow} L^{\iy}(\mu) \stackrel{i_{\mu}}{\longrightarrow} L^2(\mu) \stackrel{\beta_1 \otimes \beta_2}{\longrightarrow} H_1 \otimes_h H_2 . \]
By the metric mapping property we have
\[
\| \alpha_1 \otimes \alpha_2 : X_1 \otimes_\e X_2 \to L^{\iy}(\mu_1) \otimes_\e L^{\iy}(\mu_2) \| = \|\alpha_1 \| \cdot \|\alpha_2\|, 
\]
and
\[
 \| \beta_1 \otimes \beta_2 : L^2(\mu) \to H_1 \otimes_h H_2 \| = \| \beta_1 \| \cdot \|\beta_2\|.
\]
We conclude that $\pi_2(\phi_1 \otimes \phi_2) \leq \|\alpha_1\| \cdot \|\alpha_2\| \cdot \|\beta_1\| \cdot \|\beta_2\|$. The inequality $\pi_2(\phi_1 \otimes \phi_2) \leq \pi_1(\phi_1)\pi_2(\phi_2)$ follows by taking the infimum over all such factorizations.

For the converse inequality, consider for $j\in\lset 1,2\rset$ operators $\lambda_j : H_j \to X_j$ such that $\|\lambda_j\| \leq 1$ and set $\lambda = \lambda_1 \otimes \lambda_2$. We have
\[ \| \lambda : H_1 \otimes_h H_2 \to X_1 \otimes_\e X_2 \| \leq \| \lambda : H_1 \otimes_\e H_2 \to X_1 \otimes_\e X_2 \| = \|\lambda_1\| \cdot \|\lambda_2\| \leq 1, \]
using $\|\cdot\|_\e \leq \|\cdot\|_h$ and the metric mapping property. It follows that
\[ 
\mathrm{hs} (\phi_1 \lambda_1) \mathrm{hs}(\phi_2 \lambda_2)
= \mathrm{hs}( (\phi_1 \lambda_1) \otimes (\phi_2 \lambda_2) )
=\mathrm{hs}( (\phi_1 \otimes \phi_2) \lambda) \leq \pi_2(\phi_1 \otimes \phi_2) ,
\]
by \eqref{eq:pi2-as-a-sup}. The inequality $\pi_2(\phi_1) \pi_2(\phi_2) \leq \pi_2(\phi_1 \otimes \phi_2)$ follows by taking the supremum over operators $\lambda_1$ and $\lambda_2$.
\end{proof}

By Proposition \ref{prop:pi2-tensorstable} and since the operator norm is smaller than the $2$-summing norm we have
\begin{equation} \label{eq:tau_eh_upperbound} \limsup_{k\ra \infty}\|\phi^{\otimes k} \|_{\e \to h}^{1/k} \leq \limsup_{k\ra \infty}\pi_2(\phi^{\otimes k} : X^{\otimes_\e k} \to H^{\otimes_h k} )^{1/k} = \pi_2(\phi),\end{equation}
for every operator $\phi:X \to H$. To prove the reverse inequality in \eqref{eq:single-letter-formula-pi2}, we start in the setting of Hilbert spaces. 

\begin{prop} \label{p:Lemma2EpsilonH}
For every finite-dimensional Hilbert space $H$ and every operator $\phi : H \to H$, we have
\[ \lim_{k \to \iy} \|\phi^{\otimes k}\|_{\e \to h}^{1/k} = \mathrm{hs}(\phi) = \pi_2(\phi). \]
\end{prop}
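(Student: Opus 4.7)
The upper bound $\limsup_{k} \|\phi^{\otimes k}\|_{\e\to h}^{1/k} \leq \pi_2(\phi)$ is already recorded as inequality~\eqref{eq:tau_eh_upperbound}, and $\pi_2(\phi) = \mathrm{hs}(\phi)$ because $\phi$ acts between Hilbert spaces. So only the matching lower bound $\liminf_{k} \|\phi^{\otimes k}\|_{\e\to h}^{1/k} \geq \mathrm{hs}(\phi)$ requires proof, and my plan is to establish it via a random Gaussian test tensor.

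Fix an orthonormal basis $(e_i)$ of $H$ (with $n=\dim H$) and let $G \in H^{\otimes k}$ be the random tensor whose coefficients in the induced product basis are i.i.d.\ standard Gaussians. A direct second-moment computation gives
\[
\mathbf{E}\,\|\phi^{\otimes k}(G)\|_h^2 \;=\; \tr\bigl((\phi^*\phi)^{\otimes k}\bigr) \;=\; (\tr\phi^*\phi)^k \;=\; \mathrm{hs}(\phi)^{2k},
\]
and the Isserlis/Wick formula yields $\mathbf{E}\,\|\phi^{\otimes k}(G)\|_h^4 \leq 3\,\mathrm{hs}(\phi)^{4k}$. Paley--Zygmund then provides an absolute constant $c>0$ and a universal probability $p>0$ (both independent of $k$) such that $\|\phi^{\otimes k}(G)\|_h \geq c\,\mathrm{hs}(\phi)^k$ with probability at least $p$.

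To control the denominator I would bound $\|G\|_\e$ polynomially in $n$ and $k$. Since $\langle G, a_1\otimes\cdots\otimes a_k\rangle$ is a standard Gaussian for every $(a_i) \in (S^{n-1})^k$, a covering of $(S^{n-1})^k$ by an $\varepsilon$-net of size at most $(3/\varepsilon)^{nk}$ combined with the Gaussian maximal inequality gives $\mathbf{E}\,\|G\|_\e \leq C\sqrt{nk\log k}$. Borell--TIS concentration for the $1$-Lipschitz map $G\mapsto \|G\|_\e$ makes this bound hold pointwise with probability exponentially close to $1$. A union bound then yields a realisation of $G$ with $\|\phi^{\otimes k}(G)\|_h\geq c\,\mathrm{hs}(\phi)^k$ and $\|G\|_\e\leq C'\sqrt{nk\log k}$ simultaneously, hence
\[
\|\phi^{\otimes k}\|_{\e\to h} \;\geq\; \frac{\|\phi^{\otimes k}(G)\|_h}{\|G\|_\e} \;\geq\; \frac{c}{C'} \cdot \frac{\mathrm{hs}(\phi)^k}{\sqrt{nk\log k}}.
\]
Taking the $k$-th root and letting $k \to \iy$, the polynomial factor disappears and we obtain $\liminf_{k}\|\phi^{\otimes k}\|_{\e\to h}^{1/k}\geq \mathrm{hs}(\phi)$, as desired.

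The main obstacle in this plan is the polynomial bound on $\mathbf{E}\,\|G\|_\e$: the moment computations and the Paley--Zygmund step are entirely routine, but the injective-norm estimate requires the $\varepsilon$-net argument (or a Dudley-type inequality). Any bound $\mathbf{E}\,\|G\|_\e = n^{o(k)}$ is enough because of the $k$-th root; this softness is what makes the scheme robust and suggests it should extend to more general operators between Banach spaces.
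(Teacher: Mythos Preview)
Your proposal is correct and follows essentially the same route as the paper: a standard Gaussian tensor $G$ in $H^{\otimes k}$, the second/fourth moment control of $\|\phi^{\otimes k}(G)\|_h$, and an $\e$-net bound on $\mathbf{E}\,\|G\|_\e$. The paper combines the two estimates slightly more economically---via H\"older in the form $\mathbf{E}|Z| \geq (\mathbf{E}|Z|^2)^{3/2}/(\mathbf{E}|Z|^4)^{1/2}$ and the deterministic inequality $\|\phi^{\otimes k}\|_{\e\to h}\cdot\|G\|_\e \geq \|\phi^{\otimes k}(G)\|_h$ integrated over $G$---so neither Paley--Zygmund nor Borell--TIS is actually needed.
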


\begin{proof}
We have $\pi_2(\phi) = \mathrm{hs}(\phi)$ (see~\cite[Theorem 4.10]{DJT95}). A standard probabilistic construction (see Appendix) shows that for every $k \geq 2$ there exists a tensor $z_k \in H^{\otimes k}$ such that
\[
\|z_k\|_{\epsilon}\leq C\sqrt{k\log(k)}, 
\]
and
\[
\|\phi^{\otimes k}(z_k)\| \geq \frac{1}{\sqrt{3}} \mathrm{hs}(\phi)^{k} .
\]
Here $C$ only depends on $\dim(H)$. We conclude that 
\[
\|\phi^{\otimes k}\|_{\epsilon\ra h}^{1/k} \geq \mathrm{hs}(\phi) \left(\frac{1}{C\sqrt{3 k \log k}}\right)^{1/k},
\]
for every $k\in\N$. Comparing with \eqref{eq:tau_eh_upperbound}, it follows that
\[
\lim_{k \to \iy} \|\phi^{\otimes k}\|_{\e \ra h}^{1/k} = \mathrm{hs}(\phi).
\qedhere
\]
\end{proof}

Next, we can treat the general case.

\begin{thm} \label{thm:Main1}
Let $X$ denote a normed space and $H$ a Hilbert space, both finite-dimensional. For every operator $\phi : X \to H$, we have
\[ \lim_{k \to \iy} \|\phi^{\otimes k}\|^{1/k}_{\e \to h} = \pi_2(\phi). \]
\end{thm}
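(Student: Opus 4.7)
The plan is to combine the already-established upper bound from \eqref{eq:tau_eh_upperbound} with a matching lower bound derived from the Hilbert-space case already proved in Proposition~\ref{p:Lemma2EpsilonH}. The bridge between the two is the variational formula \eqref{eq:pi2-as-a-sup}, which expresses $\pi_2(\phi)$ as a supremum of Hilbert--Schmidt norms of operators $\phi\lambda : H \to H$ where $\lambda : H \to X$ ranges over the unit ball.

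Concretely, I would fix $\lambda : H \to X$ with $\|\lambda\| \leq 1$ and observe the factorization $(\phi\lambda)^{\otimes k} = \phi^{\otimes k} \circ \lambda^{\otimes k}$. The metric mapping property gives
\[ \|\lambda^{\otimes k} : H^{\otimes_\e k} \to X^{\otimes_\e k}\| = \|\lambda\|^k \leq 1, \]
so that
\[ \|(\phi\lambda)^{\otimes k} : H^{\otimes_\e k} \to H^{\otimes_h k}\| \leq \|\phi^{\otimes k} : X^{\otimes_\e k} \to H^{\otimes_h k}\|. \]
Taking $k$th roots, letting $k \to \infty$, and applying Proposition~\ref{p:Lemma2EpsilonH} to the operator $\phi\lambda : H \to H$ yields
\[ \mathrm{hs}(\phi\lambda) = \lim_{k \to \infty} \|(\phi\lambda)^{\otimes k}\|_{\e \to h}^{1/k} \leq \liminf_{k \to \infty} \|\phi^{\otimes k}\|_{\e \to h}^{1/k}. \]
Taking the supremum over $\lambda$ in the unit ball of $\mathcal{L}(H,X)$ and invoking \eqref{eq:pi2-as-a-sup} gives the lower bound $\pi_2(\phi) \leq \liminf_k \|\phi^{\otimes k}\|_{\e \to h}^{1/k}$. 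Combined with the upper bound \eqref{eq:tau_eh_upperbound}, this produces the claimed limit.

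The substantive content of the argument sits in Proposition~\ref{p:Lemma2EpsilonH} (handled via the probabilistic construction in the appendix) and in the variational identity \eqref{eq:pi2-as-a-sup}; the reduction outlined above is the natural way to transfer the Hilbert-to-Hilbert result to the Banach-to-Hilbert setting. I do not expect any real obstacle here, since the key inequality $\|(\phi\lambda)^{\otimes k}\|_{\e\to h} \leq \|\phi^{\otimes k}\|_{\e \to h}$ follows directly from the metric mapping property on the injective side and the fact that $\lambda^{\otimes k}$ maps the injective tensor product of $H$ into the injective tensor product of $X$ with norm at most one.
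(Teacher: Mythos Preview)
Your argument is correct and matches the paper's proof essentially line for line: both fix $\lambda:H\to X$ with $\|\lambda\|\le 1$, use the metric mapping property on the injective side to get $\|(\phi\lambda)^{\otimes k}\|_{\e\to h}\le\|\phi^{\otimes k}\|_{\e\to h}$, apply Proposition~\ref{p:Lemma2EpsilonH} to $\phi\lambda$, and then take the supremum via~\eqref{eq:pi2-as-a-sup} before closing with~\eqref{eq:tau_eh_upperbound}. There is no difference in approach.
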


\begin{proof}
Let $\phi:X\to H$ be $2$-summing and consider a bounded operator $\lambda : H \to X$ with $\| \lambda  \|\leq 1$. By Proposition \ref{p:Lemma2EpsilonH} and the metric mapping property we have
\[
\liminf_{k \to \iy} \|\phi^{\otimes k}\|^{1/k}_{\e \to h}\geq
\liminf_{k \to \iy} \|(\phi \lambda)^{\otimes k}\|^{1/k}_{\e \to h}
 = \pi_2(\phi\lambda),
\]
By \eqref{eq:pi2-as-a-sup}, taking the supremum over $\lambda$ shows that 
\[ \liminf_{k \to \iy} \|\phi^{\otimes k}\|^{1/k}_{\e \to h}
 \geq \pi_2(\phi) . \]
Using \eqref{eq:tau_eh_upperbound} finishes the proof.
\end{proof}

By the duality of the injective and projective tensor norms, we obtain the following corollary to Theorem~\ref{thm:Main1} concerning the operator norms $\|\phi^{\otimes k}\|_{h \to \pi}$ of operators $\phi:H\ra X$ where the domain $H^{\otimes k}$ is equipped with the Hilbert space tensor product structure and the range $X^{\otimes k}$ with the projective tensor norm. 

\begin{cor} \label{cor:Main12}
Let $H$ be a finite-dimensional Hilbert space and $Y$ be a finite-dimensional Banach space. For every operator $\phi : H \to Y$, we have
\[ \lim_{k \to \iy} \|\phi^{\otimes k}\|_{h \to \pi}^{1/k} = \pi_2(\phi^*). \]
\end{cor}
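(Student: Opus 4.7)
The plan is to deduce Corollary \ref{cor:Main12} from Theorem \ref{thm:Main1} applied to the adjoint operator $\phi^* : Y^* \to H^*$, using the duality between the injective and projective tensor norms together with the self-duality of the Hilbertian tensor product.

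First I would set up the dualities carefully in finite dimensions. For finite-dimensional Banach spaces $X_1, X_2$, one has the isometric identification $(X_1 \otimes_\pi X_2)^* = X_1^* \otimes_\e X_2^*$, and iterating gives $(Y^{\otimes_\pi k})^* = (Y^*)^{\otimes_\e k}$ isometrically. Similarly, $H^*$ is again a Hilbert space (with dimension equal to $\dim H$), and $(H^{\otimes_h k})^* = (H^*)^{\otimes_h k}$ isometrically under the canonical identification. Under these identifications, the Banach-space adjoint of $\phi^{\otimes k} : H^{\otimes_h k} \to Y^{\otimes_\pi k}$ is exactly $(\phi^*)^{\otimes k} : (Y^*)^{\otimes_\e k} \to (H^*)^{\otimes_h k}$.

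Next, since the norm of a bounded operator between finite-dimensional normed spaces equals the norm of its Banach-space adjoint, I obtain
\[
\|\phi^{\otimes k}\|_{h \to \pi} \;=\; \|(\phi^*)^{\otimes k}\|_{\e \to h}
\]
for every $k \geq 1$. Taking $k$th roots and letting $k \to \infty$, Theorem~\ref{thm:Main1} (applied with the Banach space $Y^*$ in place of $X$ and the Hilbert space $H^*$ in place of $H$, and to the operator $\phi^* : Y^* \to H^*$) yields
\[
\lim_{k \to \infty} \|\phi^{\otimes k}\|_{h \to \pi}^{1/k}
\;=\; \lim_{k \to \infty} \|(\phi^*)^{\otimes k}\|_{\e \to h}^{1/k}
\;=\; \pi_2(\phi^*),
\]
which is the desired formula.

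The only subtlety, and thus the main thing to double-check, is the compatibility of the various dual pairings: one must verify that the pairing between $H^{\otimes_h k}$ and $(H^*)^{\otimes_h k}$ (via the inner product) and between $Y^{\otimes_\pi k}$ and $(Y^*)^{\otimes_\e k}$ (via the injective–projective duality) are precisely the ones that make $(\phi^{\otimes k})^* = (\phi^*)^{\otimes k}$. This is a standard but indispensable verification; once it is in place, no further work is required, as the limit identity is transported from Theorem~\ref{thm:Main1} by a one-line argument.
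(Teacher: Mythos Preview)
Your proof is correct and is precisely the argument the paper has in mind: the corollary is stated immediately after Theorem~\ref{thm:Main1} with only the remark that it follows ``by the duality of the injective and projective tensor norms,'' and your write-up simply spells out that duality (together with the self-duality of the Hilbertian tensor product and the identification $(\phi^{\otimes k})^*=(\phi^*)^{\otimes k}$) in detail.
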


\subsection{Injective-to-projective regularization} 

Consider an operator $\phi:X\ra Y$ factoring as $\phi=\phi_2\phi_1$ through a Hilbert space $H$ with operators $\phi_1:X\ra H$ and $\phi_2:H\ra Y$. By submultiplicativity of the operator norm, we have
\[ \| \phi^{\otimes k}\|_{\e \to \pi} \leq \| \phi_1^{\otimes k}\|_{\e \to h}\| \phi_2^{\otimes k}\|_{h \to \pi} \]
and therefore, using Theorem \ref{thm:Main1} and Corollary \ref{cor:Main12}, we find that
\[
\limsup_{k \to \iy} \|\phi^{\otimes k}\|^{1/k}_{\e \to \pi} \leq \pi_2(\phi_1) \pi_2(\phi_2^*).
\]
Taking the infimum over all such factorizations shows that 
\begin{equation}\label{eq:tau_ep_upperbound}
\limsup_{k \to \iy} \|\phi^{\otimes k}\|^{1/k}_{\e \to \pi} \leq \gamma_2^*(\phi).
\end{equation}
To prove the reverse inequality in \eqref{eq:single-letter-formula}, we start by proving it in special case of operators between finite-dimensional Hilbert spaces. This was proven in \cite{paperB}, but for completeness we include a proof.

\begin{thm}\label{thm:identity}
Let $H$ denote a finite-dimensional Hilbert space and $\ident:H\ra H$ the identity map. We have
\[
\lim_{k\ra \infty}\|\ident^{\otimes k}\|^{1/k}_{\epsilon\ra \pi} = \dim(H) =\gamma_2^*(\ident).
\]
\end{thm}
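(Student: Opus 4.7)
The plan is to handle the upper and lower bounds separately, while also identifying $\gamma_2^*(\ident_H)$ with $\dim(H)$ along the way.

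For the computation of $\gamma_2^*(\ident_H)$, I would use that on Hilbert spaces the $2$-summing norm coincides with the Hilbert--Schmidt norm $\mathrm{hs}$ (as already cited via \cite[Theorem~4.10]{DJT95}). The factorization $\ident_H = \ident_H \circ \ident_H$ then gives $\gamma_2^*(\ident_H) \leq \mathrm{hs}(\ident_H)^2 = \dim(H)$; conversely, any factorization $\ident_H = \phi_2 \phi_1$ through a Hilbert space satisfies
\[
\dim(H) \;=\; |\tr(\phi_1 \phi_2)| \;\leq\; \mathrm{hs}(\phi_1)\,\mathrm{hs}(\phi_2) \;=\; \pi_2(\phi_1)\,\pi_2(\phi_2^*)
\]
by the Cauchy--Schwarz inequality for the Hilbert--Schmidt inner product. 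This gives $\gamma_2^*(\ident_H) = \dim(H)$, and the upper bound $\limsup_{k\to\infty}\|\ident^{\otimes k}\|_{\epsilon\to\pi}^{1/k} \leq \dim(H)$ follows from \eqref{eq:tau_ep_upperbound} applied to $\phi = \ident_H$.

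For the matching lower bound, my key observation is the pointwise inequality
\[
\|z\|_\pi \cdot \|z\|_\epsilon \;\geq\; \|z\|_h^2 \qquad \text{for every } z \in H^{\otimes k},
\]
which I would derive from the duality $(H^{\otimes_\pi k})^* \cong H^{\otimes_\epsilon k}$ available in this finite-dimensional Hilbertian setting (using $H \cong H^*$ via the inner product): writing $\|z\|_\pi = \sup\{|\langle \psi, z\rangle| : \|\psi\|_\epsilon \leq 1\}$ and plugging in $\psi = z/\|z\|_\epsilon$ gives the bound. Optimising over $z$ promotes this to the operator-norm estimate
\[
\|\ident^{\otimes k}\|_{\epsilon\to\pi} \;\geq\; \|\ident^{\otimes k}\|_{\epsilon\to h}^2.
\]
Applying Proposition~\ref{p:Lemma2EpsilonH} to $\phi = \ident_H$ shows that the right-hand side has $(1/k)$-th root tending to $\mathrm{hs}(\ident_H)^2 = \dim(H)$, which matches the upper bound.

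The main conceptual point, and the obstacle to more naive approaches, is that one does not need any separate construction of tensors realising the projective growth: the Cauchy--Schwarz identity automatically converts the large $\epsilon$-to-$h$ gap provided by Proposition~\ref{p:Lemma2EpsilonH} (equivalently, the probabilistic tensor from the Appendix) into an equally large $\epsilon$-to-$\pi$ gap. Combining bounds on $\|\ident^{\otimes k}\|_{\epsilon\to h}$ and $\|\ident^{\otimes k}\|_{h\to\pi}$ by submultiplicativity produces only the upper bound, so the content lies in the lower bound argument.
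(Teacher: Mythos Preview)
Your proof is correct and follows essentially the same route as the paper: the lower bound comes from the duality inequality $\|z\|_h^2 \leq \|z\|_\epsilon \|z\|_\pi$, yielding $\|\ident^{\otimes k}\|_{\epsilon\to\pi} \geq \|\ident^{\otimes k}\|_{\epsilon\to h}^2$, and then Proposition~\ref{p:Lemma2EpsilonH} (the paper cites Theorem~\ref{thm:Main1}, which specialises to the same thing here) gives $\dim(H)$; the upper bound is~\eqref{eq:tau_ep_upperbound}. The only addition is that you spell out the computation $\gamma_2^*(\ident_H)=\dim(H)$ via Cauchy--Schwarz for the Hilbert--Schmidt inner product, which the paper simply asserts.
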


\begin{proof}
For any $z\in H^{\otimes_h k}$, we have
\[
\|z\|^2_{h}=|\braket{z}{z}|\leq \|z\|_{\epsilon}\|z\|_{\pi},
\]
by duality of the injective and projective norms in finite dimensions. This shows 
\[
\|\ident^{\otimes k}\|_{\epsilon\ra \pi} \geq \|\ident^{\otimes k}\|_{\epsilon\ra h}^2 .
\]
By Theorem \ref{thm:Main1}, we conclude that
\[
\liminf_{k\ra \infty}\|\ident^{\otimes k}\|^{1/k}_{\epsilon\ra \pi} \geq \pi_2(\ident)^2 = \mathrm{hs}(\ident)^2 = \dim(H).
\]
Since $\gamma_2^*(\ident)=\dim(H)$, the theorem follows from \eqref{eq:tau_ep_upperbound}.
\end{proof}

Next, we prove our second main result:

\begin{thm}\label{thm:Main2}
Let $X$ and $Y$ denote finite-dimensional normed spaces. For any operator $\phi : X \to Y$ we have
\[ \lim_{k \to \iy} \|\phi^{\otimes k}\|_{\e \to \pi}^{1/k} =\gamma_2^*(\phi). \]
\end{thm}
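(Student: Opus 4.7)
The upper bound $\limsup_{k\to\iy}\|\phi^{\otimes k}\|_{\e\to\pi}^{1/k}\le\gamma_2^*(\phi)$ is already established in \eqref{eq:tau_ep_upperbound}, so only the matching lower bound must be shown. The plan is to combine trace duality \eqref{eq:trace-duality} with polar decomposition to reduce the problem to an operator $P:H\to H$ that is positive semi-definite, and then to handle this Hilbertian PSD case by a ``splitting trick'' $P=P^{1/2}\cdot P^{1/2}$ that reduces it to Theorem~\ref{thm:Main1}.

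The core step will be the following Hilbertian PSD inequality: for every PSD operator $P:H\to H$ on a finite-dimensional Hilbert space,
\[
\|P^{\otimes k}\|_{\e\to\pi}\ge\|(P^{1/2})^{\otimes k}\|_{\e\to h}^{2}.
\]
To prove it, I use that $(P^{1/2})^{\otimes k}$ is self-adjoint with square $P^{\otimes k}$, so for every $z\in H^{\otimes k}$ we have $\|(P^{1/2})^{\otimes k} z\|_{h}^{2}=\langle z, P^{\otimes k} z\rangle$. Applying the injective/projective duality on the finite-dimensional space $H^{\otimes k}$ (exactly as in the proof of Theorem~\ref{thm:identity}) yields
\[
\|(P^{1/2})^{\otimes k} z\|_{h}^{2} = |\langle z, P^{\otimes k} z\rangle|\le \|z\|_\e\cdot\|P^{\otimes k} z\|_\pi\le\|z\|_\e^{2}\cdot\|P^{\otimes k}\|_{\e\to\pi},
\]
and taking the supremum over $z\in H^{\otimes k}$ gives the claim. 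Combined with Theorem~\ref{thm:Main1} applied to $P^{1/2}$, which yields $\|(P^{1/2})^{\otimes k}\|_{\e\to h}^{1/k}\to\pi_{2}(P^{1/2})=\mathrm{hs}(P^{1/2})=\sqrt{\tr(P)}$, this gives $\liminf_{k}\|P^{\otimes k}\|_{\e\to\pi}^{1/k}\ge\tr(P)$.

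To reduce the general theorem to this PSD inequality, I use trace duality \eqref{eq:trace-duality}: it suffices to show that for every finite-dimensional Hilbert space $H$ and every pair $\alpha:H\to X$, $\beta:Y\to H$ with $\|\alpha\|,\|\beta\|\le 1$, we have $\liminf_{k}\|\phi^{\otimes k}\|_{\e\to\pi}^{1/k}\ge|\tr(\beta\phi\alpha)|$. Setting $\psi=\beta\phi\alpha:H\to H$ and writing its polar decomposition $\psi=U|\psi|$ with $U$ a partial isometry of norm $\le 1$, the map $\tilde\beta:=U^{*}\beta:Y\to H$ still has norm $\le 1$ and satisfies $\tilde\beta\phi\alpha=|\psi|$, which is PSD with $\tr(|\psi|)=\|\psi\|_{1}\ge|\tr(\psi)|$. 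By the metric mapping property,
\[
\||\psi|^{\otimes k}\|_{\e\to\pi} = \|\tilde\beta^{\otimes k}\phi^{\otimes k}\alpha^{\otimes k}:H^{\otimes_\e k}\to H^{\otimes_\pi k}\|\le\|\phi^{\otimes k}\|_{\e\to\pi}.
\]
Chaining the inequalities produces $\liminf_{k}\|\phi^{\otimes k}\|_{\e\to\pi}^{1/k}\ge\liminf_{k}\||\psi|^{\otimes k}\|_{\e\to\pi}^{1/k}\ge\tr(|\psi|)\ge|\tr(\beta\phi\alpha)|$, and taking the supremum over admissible triples finishes the proof.

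The main obstacle is finding the right PSD identity: a naive application of the duality $|\langle z,z\rangle|\le\|z\|_\e\|z\|_\pi$ to the operator $\phi$ itself only yields $\mathrm{hs}(\phi)^{2}/\|\phi\|$ in the limit, which is strictly smaller than $\|\psi\|_{1}$ in general. The key idea is to apply the duality after splitting $P=P^{1/2}P^{1/2}$: this converts one tensor power of $P$ into two tensor powers of $P^{1/2}$ against which Theorem~\ref{thm:Main1} can be invoked, so that the Hilbert--Schmidt norm of $P^{1/2}$ squared correctly recovers the full trace norm needed for trace duality to close the argument.
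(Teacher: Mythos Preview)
Your proof is correct. Both your argument and the paper's rest on the same core duality inequality $|\langle z,w\rangle|\le\|z\|_\e\|w\|_\pi$ combined with Theorem~\ref{thm:Main1}, but the reductions differ. The paper first establishes the special case $\phi=\ident_H$ (Theorem~\ref{thm:identity}) via this duality, and then reduces the general case to it by Haar-averaging $u\,\beta\phi\alpha\,u^{-1}$ over the orthogonal/unitary group, which collapses $\beta\phi\alpha$ to the scalar multiple $\frac{\tr(\beta\phi\alpha)}{\dim H}\,\ident_H$. You instead generalize the duality step of Theorem~\ref{thm:identity} directly to arbitrary PSD operators via the square-root splitting $P=P^{1/2}P^{1/2}$, and then reduce to the PSD case by polar decomposition. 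Your route is more elementary in that it avoids the Haar integral and uses only linear algebra; as a byproduct it yields the slightly sharper intermediate bound $\liminf_k\|\phi^{\otimes k}\|_{\e\to\pi}^{1/k}\ge\tr|\beta\phi\alpha|$ rather than just $|\tr(\beta\phi\alpha)|$, though the two suprema over $\alpha,\beta$ of course coincide. The paper's averaging argument, on the other hand, reduces everything to a single canonical operator and makes the appearance of $\dim H=\gamma_2^*(\ident_H)$ transparent.
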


\begin{proof}
Consider a finite-dimensional Hilbert space $H$ and operators $\alpha:Y\ra H$ and $\beta:H\ra X$ with $\|\alpha\|,\|\beta\|\leq 1$. Let $\mathrm{d}u$ denote the Haar measure on the orthogonal group $G=O(H)$ (or the unitary group $G=U(H)$ in the complex case), and set
\[
\psi := \int_{G} u\beta\phi\alpha u^{-1} \, \mathrm{d}u .
\]
For any $k \geq 1$, it follows from the triangle inequality and the metric mapping property that $\|\psi^{\otimes k}\|_{\e \to \pi} \leq \|\phi^{\otimes k}\|_{\e \to \pi}$. Moreover, by a standard computation 
\[
\psi =\frac{\tr(\beta\phi\alpha)}{\dim(H)}\ident_H .
\]
Using Theorem \ref{thm:identity} we find that
\[
|\tr (\beta\phi\alpha)|=\liminf_{k\ra \infty}\|\psi^{\otimes k}\|^{1/k}_{\e \to \pi}\leq \liminf_{k\ra \infty}\|\phi^{\otimes k}\|^{1/k}_{\e \to \pi}.
\]
Taking the supremum over finite-dimensional Hilbert spaces $H$ and operators $\alpha,\beta$  as above, shows that 
\[
\gamma_2^*(\phi)\leq \liminf_{k\ra \infty}\|\phi^{\otimes k}\|^{1/k}_{\e \to \pi}.
\]
Now, the theorem follows from \eqref{eq:tau_ep_upperbound}.
\end{proof}

\section{Extension to infinite-dimensional spaces}
\label{sec:infinite-dimension}

Given Banach spaces $X_1$ and $X_2$, we denote by $X_1 \bar{\otimes}_\e X_2$ the completion of $X_1 {\otimes}_\e X_2$, and similarly for other tensor norms. A bounded operator $\phi$ from $X_1 \otimes_\e X_2$ to a Banach space extends uniquely to an operator on $X_1 \bar{\otimes}_\e X_2$ which we also denote by $\phi$. By repeating the argument given for Proposition \ref{prop:pi2-tensorstable}, we find the following:

 \begin{prop} \label{prop:pi2-tensorstable-infdim}
 Let $X_1$, $X_2$ be Banach spaces, $H_1$, $H_2$ be Hilbert spaces and $\phi_1:X_1 \to H_1$ and $\phi_2 : X_2 \to H_2$ be $2$-summing operators. Then the operator
 \[
 \phi_1 \otimes \phi_2: X_1 \bar\otimes_\epsilon X_2\ra H_1\bar\otimes_h H_2
 \]
 is $2$-summing and satisfies $\pi_2(\phi_1 \otimes \phi_2)=\pi_2(\phi_1)\pi_2(\phi_2)$.
 \end{prop}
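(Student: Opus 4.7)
The plan is to follow the finite-dimensional argument of Proposition~\ref{prop:pi2-tensorstable} essentially verbatim, checking at each step that the relevant maps admit continuous extensions to the completed tensor products. The two ingredients used there --- the Pietsch factorization theorem recalled in \eqref{equ:2NucFact} and the characterization \eqref{eq:pi2-as-a-sup} of the $2$-summing norm --- are already available without a finite-dimensional hypothesis, so only the tensorial manipulations require re-examination.

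For the upper bound $\pi_2(\phi_1\otimes\phi_2)\leq \pi_2(\phi_1)\pi_2(\phi_2)$, I would fix $\delta>0$ and choose Pietsch factorizations $\phi_j=\beta_j i_{\mu_j}\alpha_j$ through $L^{\infty}(\mu_j)\to L^2(\mu_j)$ with $\|\alpha_j\|\cdot\|\beta_j\|\leq (1+\delta)\pi_2(\phi_j)$, and set $\mu=\mu_1\otimes\mu_2$. The canonical identification $L^2(\mu_1)\bar\otimes_h L^2(\mu_2)=L^2(\mu)$ and the isometric embedding $\kappa:L^{\infty}(\mu_1)\bar\otimes_\epsilon L^{\infty}(\mu_2)\to L^{\infty}(\mu)$ both persist on completions; these are standard facts from Banach space theory. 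The metric mapping property for $\bar\otimes_\epsilon$ and $\bar\otimes_h$ then ensures that $\alpha_1\otimes\alpha_2$ and $\beta_1\otimes\beta_2$ extend by continuity to bounded maps between the completions with norms exactly $\|\alpha_1\|\cdot\|\alpha_2\|$ and $\|\beta_1\|\cdot\|\beta_2\|$. Composing these extensions with $\kappa$ and $i_\mu$ yields a factorization of $\phi_1\otimes\phi_2$ of Pietsch type, whence $\pi_2(\phi_1\otimes\phi_2)\leq (1+\delta)^2\pi_2(\phi_1)\pi_2(\phi_2)$, and letting $\delta\to 0$ concludes.

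For the reverse inequality, I would pick bounded operators $\lambda_j:H_j\to X_j$ with $\|\lambda_j\|\leq 1$. Exactly as in the finite-dimensional case, the inequality $\|\cdot\|_\epsilon\leq\|\cdot\|_h$ on $H_1\otimes H_2$ together with the metric mapping property forces $\|\lambda_1\otimes\lambda_2:H_1\otimes_h H_2\to X_1\otimes_\epsilon X_2\|\leq 1$, and this operator extends uniquely to a contraction $\lambda:H_1\bar\otimes_h H_2\to X_1\bar\otimes_\epsilon X_2$. Since $(\phi_1\otimes\phi_2)\lambda=(\phi_1\lambda_1)\otimes(\phi_2\lambda_2)$ is an operator between Hilbert spaces, its Hilbert--Schmidt norm factorizes multiplicatively as $\mathrm{hs}(\phi_1\lambda_1)\cdot\mathrm{hs}(\phi_2\lambda_2)$, and by \eqref{eq:pi2-as-a-sup} this is bounded above by $\pi_2(\phi_1\otimes\phi_2)$. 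Taking the supremum over $\lambda_1$ and $\lambda_2$ and applying \eqref{eq:pi2-as-a-sup} once more yields $\pi_2(\phi_1)\pi_2(\phi_2)\leq\pi_2(\phi_1\otimes\phi_2)$.

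The only genuine subtleties are the two identifications of completed tensor products with familiar function spaces (the isometric embedding $\kappa$ and the $L^2$-identification); these are classical but should be cited rather than taken for granted. Beyond that, the argument is purely a book-keeping exercise in extending maps from algebraic tensor products to their completions via the metric mapping property.
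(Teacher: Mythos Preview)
Your proposal is correct and follows exactly the approach the paper intends: the paper's own ``proof'' of Proposition~\ref{prop:pi2-tensorstable-infdim} is simply the sentence ``By repeating the argument given for Proposition~\ref{prop:pi2-tensorstable}, we find the following,'' and you have faithfully reproduced that argument while noting the routine extensions to completions.
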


Let $\phi:X\ra H$ denote a bounded operator from a Banach space to a Hilbert space. Similar to \eqref{eq:tau_eh_upperbound} we obtain
\begin{equation}\label{equ:UpperBoundPi2Inf} \limsup_{k \to \iy} \|\phi^{\otimes k}\|^{1/k}_{\e \to h} \leq \mathrm{hs}(\phi),\end{equation}
from Proposition \ref{prop:pi2-tensorstable-infdim}. To show the reverse inequality we start in the case of Hilbert spaces.

\begin{prop} \label{p:Lemma2EpsilonH-infdim}
For every Hilbert space $H$ and every bounded operator $\phi : H \to H$, we have
\[ \lim_{k \to \iy} \|\phi^{\otimes k}\|_{\e \to h}^{1/k} = \mathrm{hs}(\phi) = \pi_2(\phi). \]
In particular, the limit is finite if and only if $\phi$ is Hilbert-Schmidt.
\end{prop}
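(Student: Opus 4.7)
The plan is to reduce to the finite-dimensional case of Proposition \ref{p:Lemma2EpsilonH} via an approximation argument using orthogonal projections onto finite-dimensional subspaces of $H$.

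For the upper bound $\limsup_{k\to\iy}\|\phi^{\otimes k}\|^{1/k}_{\e \to h} \leq \mathrm{hs}(\phi)$, I would simply combine \eqref{equ:UpperBoundPi2Inf} with the identity $\pi_2(\phi) = \mathrm{hs}(\phi)$ for operators between Hilbert spaces, valid with both sides possibly equal to $+\iy$. If $\phi$ is Hilbert--Schmidt this gives an immediate quantitative bound; if $\phi$ is not Hilbert--Schmidt, the bound is vacuous.

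For the lower bound, I fix an arbitrary finite-dimensional subspace $K \subseteq H$, let $P_K : H \to K$ denote the orthogonal projection and $\iota_K : K \to H$ the isometric inclusion, and set $\tilde\phi := P_K \phi \iota_K : K \to K$. The metric mapping property gives $\|\iota_K^{\otimes k} : K^{\otimes_\e k} \to H^{\otimes_\e k}\| \leq 1$ and $\|P_K^{\otimes k} : H^{\otimes_h k} \to K^{\otimes_h k}\| \leq 1$, so from the factorisation $\tilde\phi^{\otimes k} = P_K^{\otimes k}\, \phi^{\otimes k}\, \iota_K^{\otimes k}$ one obtains $\|\tilde\phi^{\otimes k}\|_{\e \to h} \leq \|\phi^{\otimes k}\|_{\e \to h}$. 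Since $K$ is finite-dimensional, Proposition \ref{p:Lemma2EpsilonH} applies to $\tilde\phi$ and yields
\[
\liminf_{k \to \iy} \|\phi^{\otimes k}\|^{1/k}_{\e \to h} \geq \lim_{k\to\iy}\|\tilde\phi^{\otimes k}\|^{1/k}_{\e \to h} = \mathrm{hs}(\tilde\phi) = \mathrm{hs}(P_K \phi P_K).
\]

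The proof is then finished by taking the supremum over finite-dimensional subspaces $K \subseteq H$ and verifying that $\sup_K \mathrm{hs}(P_K \phi P_K) = \mathrm{hs}(\phi)$, with both sides possibly infinite. Concretely, fixing an orthonormal basis $(e_i)_{i \in I}$ of $H$ and letting $K_F := \mathrm{span}(e_i : i \in F)$ for finite $F \subseteq I$, monotone convergence gives $\mathrm{hs}(P_{K_F} \phi P_{K_F})^2 = \sum_{i,j \in F} |\langle e_i, \phi e_j\rangle|^2 \uparrow \sum_{i,j \in I} |\langle e_i, \phi e_j\rangle|^2 = \mathrm{hs}(\phi)^2$, which covers the Hilbert--Schmidt and non-Hilbert--Schmidt cases simultaneously and so also establishes the ``in particular'' clause. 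The only point requiring care is the correct use of the metric mapping property for $\iota_K^{\otimes k}$ in the injective structure and for $P_K^{\otimes k}$ in the Hilbertian structure; once those contractivity statements are in place, the infinite-dimensional proposition falls out of the finite-dimensional one by this projection approximation.
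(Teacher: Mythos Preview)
Your proof is correct and follows essentially the same approach as the paper: reduce to the finite-dimensional Proposition~\ref{p:Lemma2EpsilonH} via finite-dimensional subspaces, invoke the metric mapping property for the comparison, and take a supremum of Hilbert--Schmidt norms. The only cosmetic difference is that the paper uses the restriction $\phi_K = \phi\iota_K : K \to H$ and the identity $\sup_K \mathrm{hs}(\phi_K) = \mathrm{hs}(\phi)$, whereas you use the compression $P_K\phi\iota_K : K \to K$; your choice has the mild advantage that Proposition~\ref{p:Lemma2EpsilonH}, stated for endomorphisms of a single finite-dimensional Hilbert space, applies to $\tilde\phi$ verbatim.
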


\begin{proof}
If $H$ is finite-dimensional, this is the content of Proposition \ref{p:Lemma2EpsilonH}. For a finite-dimensional subspace $K \subset H$, we denote by $\phi_K$ the restriction of $\phi$ to ~$K$. By the metric mapping property (applied to the canonical inclusion) we have $\|\phi_K^{\otimes k}\|_{\e \to h} \leq \|\phi^{\otimes k}\|_{\e \to h}$ for every $k$ and therefore
\[ \liminf_{k \to \iy} \|\phi^{\otimes k}\|^{1/k}_{\e \to h} \geq 
\liminf_{k \to \iy} \|\phi_K^{\otimes k}\|^{1/k}_{\e \to h} = \mathrm{hs}(\phi_K), \]
where we used Proposition \ref{p:Lemma2EpsilonH}. Since 
\[ \label{eq:HS-finitedimension} \mathrm{hs}(\phi) = \sup \{ \mathrm{hs}(\phi_K) ~:~ K \textnormal{ finite-dimensional subspace of }H \},\]
we conclude that
\[  \liminf_{k \to \iy} \|\phi^{\otimes k}\|^{1/k}_{\e \to h} \geq \mathrm{hs}(\phi).
\]
Together with \eqref{equ:UpperBoundPi2Inf} this completes the proof.
\end{proof}

Repeating the argument given for Theorem \ref{thm:Main1} and using Proposition \ref{p:Lemma2EpsilonH-infdim} shows the following: 

\begin{thm} \label{thm:Main1-infdim}
Let $X$ denote a Banach space and $H$ a Hilbert space. For every bounded operator $\phi : X \to H$, we have
\[ \lim_{k \to \iy} \|\phi^{\otimes k}\|^{1/k}_{\e \to h} = \pi_2(\phi). \]
In particular, the limit is finite if and only if $\phi$ is $2$-summing.
\end{thm}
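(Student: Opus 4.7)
The plan is to mirror the proof of Theorem \ref{thm:Main1} verbatim, with Proposition \ref{p:Lemma2EpsilonH-infdim} replacing Proposition \ref{p:Lemma2EpsilonH} at the crucial Hilbert-space step, and Proposition \ref{prop:pi2-tensorstable-infdim} in place of Proposition \ref{prop:pi2-tensorstable}. Since the paper adopts the convention that a non-bounded/non-$2$-summing operator has norm $+\infty$, the equality $\lim_{k\to\infty} \|\phi^{\otimes k}\|_{\varepsilon\to h}^{1/k} = \pi_2(\phi)$ will automatically yield the ``if and only if'' clause.

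For the upper bound, I would first invoke Proposition \ref{prop:pi2-tensorstable-infdim}, which gives $\pi_2(\phi^{\otimes k}) = \pi_2(\phi)^k$ whenever $\phi$ is $2$-summing. Since the operator norm is dominated by the $2$-summing norm, this yields
\[
\|\phi^{\otimes k}\|_{\varepsilon\to h} \leq \pi_2(\phi^{\otimes k}) = \pi_2(\phi)^k,
\]
which is exactly \eqref{equ:UpperBoundPi2Inf} and gives $\limsup_{k\to\infty}\|\phi^{\otimes k}\|^{1/k}_{\varepsilon\to h} \leq \pi_2(\phi)$. (If $\phi$ is not $2$-summing the bound is vacuous.)

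For the matching lower bound, I would take any bounded operator $\lambda : H \to X$ with $\|\lambda\|\leq 1$. The composition $\phi\lambda : H \to H$ is a bounded operator between Hilbert spaces, so Proposition \ref{p:Lemma2EpsilonH-infdim} applies and gives
\[
\lim_{k\to\infty}\|(\phi\lambda)^{\otimes k}\|_{\varepsilon\to h}^{1/k} = \mathrm{hs}(\phi\lambda).
\]
Writing $(\phi\lambda)^{\otimes k} = \phi^{\otimes k}\circ \lambda^{\otimes k}$ and applying the metric mapping property to $\lambda^{\otimes k}: H^{\otimes_\varepsilon k} \to X^{\otimes_\varepsilon k}$ (which has norm $\leq \|\lambda\|^k \leq 1$), I obtain $\|(\phi\lambda)^{\otimes k}\|_{\varepsilon\to h} \leq \|\phi^{\otimes k}\|_{\varepsilon\to h}$, hence $\mathrm{hs}(\phi\lambda) \leq \liminf_{k\to\infty}\|\phi^{\otimes k}\|^{1/k}_{\varepsilon\to h}$. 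Taking the supremum over all $\lambda$ with $\|\lambda\|\leq 1$ and applying \eqref{eq:pi2-as-a-sup} closes the argument.

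No new obstacles arise compared to the finite-dimensional case: all three inputs (the metric mapping property, the sup-formula \eqref{eq:pi2-as-a-sup}, and the Hilbert-space version Proposition \ref{p:Lemma2EpsilonH-infdim}) have already been set up in the infinite-dimensional setting. The only mildly delicate point is the ``if and only if'' clause, but it is automatic: if $\phi$ fails to be $2$-summing then $\pi_2(\phi)=+\infty$, so the supremum on the right of \eqref{eq:pi2-as-a-sup} is unbounded, and the lower-bound argument forces $\liminf_{k\to\infty}\|\phi^{\otimes k}\|^{1/k}_{\varepsilon\to h} = +\infty$ as well.
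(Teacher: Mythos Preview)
Your proposal is correct and follows essentially the same approach as the paper, which simply states that the argument for Theorem \ref{thm:Main1} goes through verbatim once Proposition \ref{p:Lemma2EpsilonH-infdim} replaces Proposition \ref{p:Lemma2EpsilonH} and \eqref{equ:UpperBoundPi2Inf} replaces \eqref{eq:tau_eh_upperbound}. Your explicit handling of the ``if and only if'' clause via the sup-formula \eqref{eq:pi2-as-a-sup} is a welcome clarification that the paper leaves implicit.
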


Next, we will prove a general version of the upper bound in Corollary \ref{cor:Main12}. The argument requires some care. 

\begin{thm} \label{thm:Main12-infdim}
Let $H$ denote a Hilbert space and $Y$ a Banach space. For every bounded operator $\phi : H \to Y$, we have
\[ \limsup_{k \to \iy} \|\phi^{\otimes k}\|_{h \to \pi}^{1/k} \leq \pi_2(\phi^*). \]
\end{thm}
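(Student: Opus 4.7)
The strategy is to bypass the tensor-norm duality $(Y^{\bar\otimes_\pi k})^*\cong(Y^*)^{\bar\otimes_\e k}$ employed in Corollary~\ref{cor:Main12}, which is valid only in finite dimensions. I propose instead to establish the non-asymptotic bound $\|\phi^{\otimes k}\|_{h\to\pi}\leq\pi_2(\phi^*)^k$ directly by induction on $k$, from which the theorem follows immediately upon taking $k$-th roots. The base case $k=1$ is just $\|\phi\|=\|\phi^*\|\leq\pi_2(\phi^*)$, and one may assume $\pi_2(\phi^*)<\infty$ (the statement being trivial otherwise).

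The inductive step rests on the following submultiplicative inequality: for Hilbert spaces $H_1,H_2$, Banach spaces $Y_1,Y_2$, and bounded operators $\phi_i:H_i\to Y_i$,
\[ \|\phi_1\otimes\phi_2:H_1\bar\otimes_h H_2\to Y_1\bar\otimes_\pi Y_2\|\leq\pi_2(\phi_1^*)\|\phi_2\|. \]
Granting this bound, I would apply it with $\phi_1=\phi$ and $\phi_2=\phi^{\otimes(k-1)}:H^{\bar\otimes_h(k-1)}\to Y^{\bar\otimes_\pi(k-1)}$ (using associativity of the Hilbert and projective tensor products) to obtain $\|\phi^{\otimes k}\|_{h\to\pi}\leq\pi_2(\phi^*)\|\phi^{\otimes(k-1)}\|_{h\to\pi}$, thereby closing the induction.

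To prove the submultiplicative inequality, I would pass to the adjoint $(\phi_1\otimes\phi_2)^*:(Y_1\bar\otimes_\pi Y_2)^*\to H_1\bar\otimes_h H_2$ and invoke the canonical identifications $(Y_1\bar\otimes_\pi Y_2)^*\cong\mathcal{L}(Y_1,Y_2^*)$ (from the universal property of the projective tensor product) and $(H_1\bar\otimes_h H_2)^*\cong H_1\bar\otimes_h H_2$ (Riesz), under which the right-hand side is naturally identified with the space of Hilbert--Schmidt operators. An element $B$ of the former corresponds to $T_B:Y_1\to Y_2^*$ with $\|T_B\|=\|B\|$, and $(\phi_1\otimes\phi_2)^*(B)$ corresponds to $S=\phi_2^*T_B\phi_1:H_1\to H_2$, whose adjoint is $S^*=\phi_1^*T_B^{\mathrm{t}}\phi_2:H_2\to H_1$, with $T_B^{\mathrm{t}}:Y_2\to Y_1^*$ representing the transposed bilinear form and having the same norm $\|B\|$. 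Using $\mathrm{hs}(S)=\mathrm{hs}(S^*)$ together with the ideal property of $\pi_2$ (which coincides with $\mathrm{hs}$ for operators between Hilbert spaces),
\[ \mathrm{hs}(S)=\mathrm{hs}(S^*)=\pi_2(\phi_1^*T_B^{\mathrm{t}}\phi_2)\leq\pi_2(\phi_1^*)\|T_B^{\mathrm{t}}\|\|\phi_2\|=\pi_2(\phi_1^*)\|B\|\|\phi_2\|, \]
yielding the claimed bound.

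The principal technical care lies in these infinite-dimensional dual identifications and in the canonical embedding $Y_2\hookrightarrow Y_2^{**}$ implicit in computing $S^*$ (so that $S^*$ is genuinely $\phi_1^*T_B^{\mathrm{t}}\phi_2$ and not something requiring bidualization); once these are verified, the inequality reduces cleanly to the standard ideal property of the $2$-summing norm.
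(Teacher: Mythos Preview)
Your proof is correct and, like the paper's, actually establishes the non-asymptotic bound $\|\phi^{\otimes k}\|_{h\to\pi}\leq\pi_2(\phi^*)^k$ for every $k$. The route, however, is genuinely different. The paper dualizes a Pietsch factorization of $\phi^*$ to obtain a factorization of $\phi^{**}$ through the inclusion $L^2(\mu)\hookrightarrow L^1(\mu)$, and then exploits the concrete identifications $L^2(\mu)^{\bar\otimes_h k}\cong L^2(\mu^{\otimes k})$ and $L^1(\mu)^{\bar\otimes_\pi k}\cong L^1(\mu^{\otimes k})$ together with the isometric embedding $j_Y^{\otimes k}:Y^{\otimes_\pi k}\to(Y^{**})^{\otimes_\pi k}$ to bound the tensor-power norms directly. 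Your argument bypasses both the Pietsch factorization and the $L^p$ identifications, replacing them with the duality $(Y_1\bar\otimes_\pi Y_2)^*\cong\mathcal{L}(Y_1,Y_2^*)$, the Hilbert--Schmidt identification of $(H_1\bar\otimes_h H_2)^*$, and the ideal property of $\pi_2$. This is more elementary and self-contained, and the submultiplicative lemma you isolate (in the asymmetric form with $\pi_2(\phi_1^*)$ on one factor and $\|\phi_2\|$ on the other) is a clean statement of independent interest. The paper's approach, by contrast, makes the appearance of $\pi_2(\phi^*)$ structurally transparent---it is literally read off from the factorization data---and its method would adapt more readily to other ideal norms that admit a Pietsch-type description.
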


\begin{proof}
Since $H$ is reflexive, we may identify $H$ and $H^{**}$. Denote by $j_Y : Y \to Y^{**}$ the canonical embedding. We use the following property of the projective tensor product (see \cite[Proposition 3.9]{defant1992tensor}): for any Banach space $Z$, the map
\[ j_Y \otimes \ident_Z : Y \otimes_\pi Z \to Y^{**} \otimes_\pi Z\]
is isometric. It follows by induction that for every $k \geq 1$, the map
\[ j_Y^{\otimes k} : Y^{\otimes_\pi k} \to (Y^{**})^{\otimes_\pi k}\]
is isometric. We have $\phi^{**}= j_Y \phi$ and therefore
\[ \|(\phi^{**})^{\otimes k}\|_{h \to \pi} = \| j_Y^{\otimes k} \phi^{\otimes k}\|_{h \to \pi} = \|\phi^{\otimes k}\|_{h \to \pi}.\]
Consider a factorization of $\phi^* : Y^* \to H^*$ as 
\[ \phi^* : Y^* \stackrel{\alpha}{\longrightarrow} L^{\iy}(\mu) \stackrel{i_\mu}{\longrightarrow} L^2(\mu) \stackrel{\beta}{\longrightarrow} H^* , \]
with bounded operators $\alpha$ and $\beta$ and the canonical embedding $i_\mu$. Taking adjoints, we obtain a factorization of $\phi^{**}$ as
\[ \phi^{**} : H \stackrel{\beta^*}{\longrightarrow} L^2(\mu)^* \stackrel{i_\mu^*}{\longrightarrow} L^\infty(\mu)^* \stackrel{\alpha^*}{\longrightarrow} Y^{**}\]
The range of the map $i_\mu^*$ is contained in $L^1(\mu)$ seen as a subspace of $L^{\infty}(\mu)^*$. Let~$\alpha^\circ$ be the restriction of $\alpha^*$ to $L^1(\mu)$. For every $k \geq 1$, set $\mu_k = \mu^{\otimes k}$ and identify $L^2(\mu)^{\bar{\otimes}_h k}$ with $L^2(\mu_k)$ and $L^1(\mu)^{\bar{\otimes}_\pi k}$ with $L^1(\mu_k)$ (using \cite[Example~2.19]{Ryan02} and Fubini's theorem. By the metric mapping property,
\[ \| (\beta^*)^{\otimes k} : H^{\otimes k} \to L^2(\mu_k) \| = \|\beta\|^k\]
\[ \| (\alpha^\circ)^{\otimes k} : L^1(\mu_k) \to (Y^{**})^{\otimes_\pi k}\| = \|\alpha^\circ\|^k \leq \|\alpha\|^k\]
It follows that
\[ \|\phi^{\otimes k}\|^{1/k}_{h \to \pi} = \| (\phi^{**})^{\otimes k} \|^{1/k}_{h \to \pi} \leq \| \alpha \| \cdot \|\beta\|. \]
Taking the lower bound over factorizations shows that
\[ \|\phi^{\otimes k}\|^{1/k}_{h \to \pi} \leq \pi_2(\phi^*) \]
and the result follows.
\end{proof}

Finally, we can show the general version of Theorem \ref{thm:Main2}.

\begin{thm}\label{thm:Main2Inf}
Let $X$ and $Y$ denote Banach spaces. For any bounded operator $\phi : X \to Y$ we have
\[ \lim_{k \to \iy} \|\phi^{\otimes k}\|_{\e \to \pi}^{1/k} =\gamma_2^*(\phi). \]
In particular, the limit is finite if and only if $\phi$ is $2$-dominated.
\end{thm}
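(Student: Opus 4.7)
The plan is to mirror the structure of the finite-dimensional proof of Theorem~\ref{thm:Main2}, using the infinite-dimensional analogues developed in this section together with the modified trace duality formula \eqref{eq:trace-duality-infdim} for $\gamma_2^*$.

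For the upper bound, suppose $\phi$ factors through a Hilbert space $H$ as $\phi = \phi_2 \phi_1$ with $\phi_1 : X \to H$ and $\phi_2 : H \to Y$ bounded. By submultiplicativity of the operator norm we have
\[ \|\phi^{\otimes k}\|_{\e \to \pi} \leq \|\phi_1^{\otimes k}\|_{\e \to h} \cdot \|\phi_2^{\otimes k}\|_{h \to \pi}. \]
Applying Theorem~\ref{thm:Main1-infdim} to the first factor and Theorem~\ref{thm:Main12-infdim} to the second yields
\[ \limsup_{k \to \iy} \|\phi^{\otimes k}\|_{\e \to \pi}^{1/k} \leq \pi_2(\phi_1) \pi_2(\phi_2^*). \]
Taking the infimum over all factorizations through a Hilbert space gives $\limsup_k \|\phi^{\otimes k}\|_{\e \to \pi}^{1/k} \leq \gamma_2^*(\phi)$.

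For the lower bound, I would use the infinite-dimensional trace duality \eqref{eq:trace-duality-infdim}. Fix finite-dimensional normed spaces $E$ and $F$, a finite-dimensional Hilbert space $H$, and operators $\alpha : E \to X$, $\beta : Y \to F$, $\psi_1 : F \to H$, $\psi_2 : H \to E$, each with norm at most~$1$. Form the operator $\widetilde{\phi} := (\psi_1 \beta) \phi (\alpha \psi_2) : H \to H$. By the metric mapping property applied to the maps $(\alpha\psi_2)^{\otimes k} : H^{\otimes_\e k} \to X^{\otimes_\e k}$ and $(\psi_1\beta)^{\otimes k} : Y^{\otimes_\pi k} \to H^{\otimes_\pi k}$, both of norm at most $1$, we get
\[ \|\widetilde{\phi}^{\otimes k}\|_{\e \to \pi} \leq \|\phi^{\otimes k}\|_{\e \to \pi}. \]
Now I would run the Haar averaging trick from the proof of Theorem~\ref{thm:Main2} on the finite-dimensional endomorphism $\widetilde{\phi}$: the operator $\int_G u \widetilde{\phi} u^{-1}\,\mathrm{d}u$ equals $\tfrac{\tr(\widetilde{\phi})}{\dim H}\ident_H$, and triangle inequality plus unitary invariance shows its $(\e \to \pi)$-norm of the $k$-fold tensor power is bounded by $\|\widetilde{\phi}^{\otimes k}\|_{\e \to \pi}$. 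Applying Theorem~\ref{thm:identity} and using cyclicity $\tr(\widetilde{\phi}) = \tr(\psi_1 \beta \phi \alpha \psi_2)$ yields
\[ |\tr(\psi_1 \beta \phi \alpha \psi_2)| \leq \liminf_{k \to \iy} \|\phi^{\otimes k}\|_{\e \to \pi}^{1/k}. \]
Taking the supremum over all admissible $E, F, H, \alpha, \beta, \psi_1, \psi_2$ and invoking \eqref{eq:trace-duality-infdim} gives $\gamma_2^*(\phi) \leq \liminf_k \|\phi^{\otimes k}\|_{\e \to \pi}^{1/k}$, completing the proof.

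The routine part is the upper bound; the main subtlety is that in infinite dimensions the trace duality \eqref{eq:trace-duality-infdim} requires the extra finite-dimensional normed spaces $E$ and $F$ to sandwich the argument, so one must check that the metric mapping property still lets one bound $\|\widetilde{\phi}^{\otimes k}\|_{\e \to \pi}$ by $\|\phi^{\otimes k}\|_{\e \to \pi}$ through these two additional layers. This works because each layer has norm at most $1$ on the appropriate tensor products, so no new estimates beyond the finite-dimensional argument of Theorem~\ref{thm:Main2} are needed once the reduction is set up.
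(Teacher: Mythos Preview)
Your proposal is correct and follows essentially the same route as the paper's proof: the upper bound via factorization through a Hilbert space combined with Theorems~\ref{thm:Main1-infdim} and~\ref{thm:Main12-infdim}, and the lower bound via the modified trace duality~\eqref{eq:trace-duality-infdim} together with the Haar averaging argument from Theorem~\ref{thm:Main2}. The paper's write-up is slightly terser (it simply says ``as in the proof of Theorem~\ref{thm:Main2}'' for the lower bound), but your explicit unpacking of the metric mapping property through the extra layers $E$ and $F$ is exactly the content behind that phrase.
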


\begin{proof}
Let $\phi:X\ra Y$ be $2$-dominated with a factorization $\phi=\phi_2\phi_1$ for a Hilbert space $H$ and operators $\phi_1:X\ra H$ and $\phi_2:H\ra Y$, where $\phi_1$ and $\phi^*_2$ are $2$-summing. We have
\[ \| \phi^{\otimes k}\|_{\e \to \pi} \leq \| \phi_1^{\otimes k}\|_{\e \to h}\| \phi_2^{\otimes k}\|_{h \to \pi} \]
and therefore, using Theorem \ref{thm:Main1-infdim} and Theorem \ref{thm:Main12-infdim}, we find that
\[
\limsup_{k \to \iy} \|\phi^{\otimes k}\|^{1/k}_{\e \to \pi} \leq \pi_2(\phi_1) \pi_2(\phi_2^*).
\]
Taking the infimum over all such factorizations shows that 
\begin{equation}\label{eq:tau_ep_upperbound-infdim}
\limsup_{k \to \iy} \|\phi^{\otimes k}\|^{1/k}_{\e \to \pi} \leq \gamma_2^*(\phi).
\end{equation}
To prove the other direction, consider finite-dimensional normed spaces $E$ and $F$, a finite-dimensional Hilbert space $H$, operators $\alpha:E\ra X$, $\beta:Y\ra F$, $\psi_1:F\ra H$ and $\psi_2:H\ra E$ all with norm less than $1$. As in the proof of Theorem \ref{thm:Main2} we find that
\[
|\tr(\psi_1\beta\phi\alpha \psi_2)| \leq \liminf_{k\ra \infty}\|\phi^{\otimes k}\|^{1/k}_{\e \to \pi}.
\]
Using \eqref{eq:trace-duality-infdim} shows that  
\[
\gamma_2^*(\phi)\leq \liminf_{k\ra \infty}\|\phi^{\otimes k}\|^{1/k}_{\e \to \pi},
\]
completing the proof.
\end{proof}

We point out the following simple consequence of Theorem \ref{thm:Main2Inf}. 

\begin{cor}
Let $X$ and $Y$ denote Banach spaces. If $\phi:X\ra Y$ is $2$-dominated, then the operators
 \[
 \phi^{\otimes k}: X^{\otimes_\epsilon k}\ra Y^{\otimes_\pi k}
 \]
 are $2$-dominated for every $k\in\N$, and satisfy $\gamma_2^*(\phi^{\otimes k})=\gamma_2^*(\phi)^k$.
\end{cor}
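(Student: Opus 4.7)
The plan is to deduce the corollary as a direct consequence of Theorem~\ref{thm:Main2Inf} by applying it to the operator $\phi^{\otimes k}$ itself and then extracting a limit along a subsequence.

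First I would apply Theorem~\ref{thm:Main2Inf} to the bounded operator
$\Phi := \phi^{\otimes k} : X^{\otimes_\e k} \to Y^{\otimes_\pi k}$,
which gives
\[ \gamma_2^*(\Phi) = \lim_{n \to \iy} \|\Phi^{\otimes n}\|_{\e \to \pi}^{1/n}. \]
The key step is then to identify $\Phi^{\otimes n}$ with $\phi^{\otimes kn}$ on the nose (isometrically). By the associativity of the injective tensor norm, we have $(X^{\otimes_\e k})^{\otimes_\e n} = X^{\otimes_\e kn}$ isometrically, and similarly for the projective tensor product $(Y^{\otimes_\pi k})^{\otimes_\pi n} = Y^{\otimes_\pi kn}$. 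Under these identifications $\Phi^{\otimes n}$ becomes $\phi^{\otimes kn}$, so
\[ \|\Phi^{\otimes n}\|_{\e \to \pi} = \|\phi^{\otimes kn}\|_{\e \to \pi}. \]

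Combining these two facts, I would write
\[ \gamma_2^*(\phi^{\otimes k}) = \lim_{n \to \iy} \|\phi^{\otimes kn}\|_{\e \to \pi}^{1/n} = \left( \lim_{n \to \iy} \|\phi^{\otimes kn}\|_{\e \to \pi}^{1/(kn)} \right)^{k} = \gamma_2^*(\phi)^{k}, \]
where the last equality follows from Theorem~\ref{thm:Main2Inf} applied to $\phi$, using that $(\|\phi^{\otimes m}\|_{\e \to \pi}^{1/m})_{m \geq 1}$ converges, hence every subsequence converges to the same limit $\gamma_2^*(\phi)$. Since $\phi$ is assumed $2$-dominated we have $\gamma_2^*(\phi) < \iy$, so $\gamma_2^*(\phi^{\otimes k}) < \iy$ and $\phi^{\otimes k}$ is $2$-dominated.

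I do not expect any genuine obstacle here; the only delicate point is the isometric identification of iterated tensor products, which is the standard associativity of the injective and projective tensor norms. Given this, the statement reduces to a routine application of Theorem~\ref{thm:Main2Inf} and the elementary fact that $(a_{kn})^{1/n} = ((a_{kn})^{1/(kn)})^k$.
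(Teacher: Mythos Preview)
Your argument is correct and is exactly the ``simple consequence'' the paper has in mind: apply Theorem~\ref{thm:Main2Inf} to $\Phi=\phi^{\otimes k}$, use associativity of $\e$ and $\pi$ to identify $\Phi^{\otimes n}$ with $\phi^{\otimes kn}$, and read off the limit along the subsequence $(kn)_n$. The only point worth a half-sentence is that $\Phi$ is indeed bounded (needed to invoke the theorem for $\Phi$); this follows at once from Theorem~\ref{thm:Main2Inf} applied to $\phi$, since $\gamma_2^*(\phi)<\infty$ forces $\|\phi^{\otimes k}\|_{\e\to\pi}<\infty$ for every $k$.
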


\section{Discussion}

\subsection{Tensor radii} We have established that the relation
\begin{equation} \lim_{k \to \iy} \left\| \phi^{\otimes k}  \right\|_{\e \to \pi}^{1/k} = \gamma_2^*(\phi), \end{equation}
holds for any bounded map $\phi : X \to Y$ between Banach spaces. In \cite{paperB}, the quantity on the right-hand side was studied under the name of \emph{tensor radius}. 

We now focus on the case of the identity map $\ident_X$ on an $n$-dimensional normed space $X$. The quantity $\mathrm{d}_X = \gamma_2(\ident_X)$ equals the Banach--Mazur distance between~$X$ and the $n$-dimensional Euclidean space. The inequality
\begin{equation} \label{eq:TensorRadiusLowerBound} \mathrm{d}_X \cdot \gamma_2^*(\ident_X) \geq n \end{equation}
then follows from the trace duality. If the space $X$ has enough symmetries, it follows from Lewis' theorem (see \cite[Proposition 16.1]{tomczak1989banach}) that \eqref{eq:TensorRadiusLowerBound} is an equality, recovering a result from \cite{paperB}. 

It is also easy to give examples of spaces where \eqref{eq:TensorRadiusLowerBound} is not an equality, answering a question from \cite{paperB}. Consider the space $X = X_1 \oplus_2 X_2$ where $X_1 = \ell_1^m$ and $X_2 = \ell_2^m$. We have $\dim(X) = 2m$, $\mathrm{d}_{X} \geq \mathrm{d}_{X_1} = \sqrt{m}$ and by ideal property for $\gamma_2^*$
\[ \gamma_2^*(\ident_X) \geq \gamma_2^*(\ident_{X_2}) = m.\]
Consequently, the inequality in \eqref{eq:TensorRadiusLowerBound} is strict for $m$ large enough.

\subsection{The Hilbert space factorization property}

We say that a pair $(X,Y)$ of Banach spaces has the \emph{Hilbert space factorization property} (HFP) if every bounded operator $\phi : X \to Y$ satisfies $\gamma_2(\phi) = \|\phi\|$ and ask which pairs have the HFP. Previously, the isomorphic version of this question, i.e., determining pairs of spaces for which these norms are equivalent, has received a lot of attention (see for example~\cite{Pisier12}). By trace duality, the pair $(X,Y)$ has the HFP if and only if every nuclear operator $\psi : Y \to X$ satisfies $\gamma_2^*(\psi) = \|\psi\|_N$, the nuclear norm. This property was studied in \cite{paperB} under the name \emph{nuclear tensorization property} and motivated by analogous questions in quantum information theory. 

If either $X$ or $Y$ is a Hilbert space, it is obvious that the pair $(X,Y)$ has the~HFP. In the complex case, there is another example.

\begin{prop}
The pair $(\ell^2_\infty(\C),\ell^2_1(\C))$ has the HFP. 
\end{prop}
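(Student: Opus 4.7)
The plan is to establish the non-trivial direction $\gamma_2(\phi)\leq\|\phi\|$ for every bounded $\phi:\ell_\infty^2(\C)\to\ell_1^2(\C)$, since the reverse inequality $\|\phi\|\leq\gamma_2(\phi)$ is immediate from the definition.

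I would begin by writing $\phi$ as a $2\times 2$ complex matrix $A=(a_{ij})$ and recording the two bilinear sups
$$\mu(A):=\sup\Big\{\Big|\textstyle\sum_{ij}\bar w_i a_{ij} z_j\Big|:|z_j|,|w_i|\leq 1\Big\},\qquad \omega(A):=\sup\Big\{\Big|\textstyle\sum_{ij}a_{ij}\langle u_i,v_j\rangle_H\Big|:\|u_i\|_H,\|v_j\|_H\leq 1\Big\}.$$
A direct calculation gives $\|\phi\|=\mu(A)$, and the trace-duality formula~\eqref{eq:trace-duality} identifies $\gamma_2^*(\phi)$ with $\omega(A)$ (the supremum over finite-dimensional Hilbert spaces~$H$ and unit vectors $u_i,v_j\in H$).

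The core step is to prove the sharp complex Grothendieck-type identity
$$\omega(A)\;=\;\mu(A)\qquad\text{for every }A\in M_2(\C),$$
which is the statement that the complex Grothendieck constant equals $1$ in the $2\times 2$ case. The inequality $\omega(A)\geq\mu(A)$ is obtained by specialising to one-dimensional $H$. For the reverse inequality, given a near-optimal tuple $(u_1,u_2,v_1,v_2)$, I would use a phase-alignment argument along the continuous circle $S^1\subset\C$: one produces unit complex scalars $e^{i\alpha_i},e^{i\beta_j}$ for which the projections of $u_i,v_j$ onto a single one-dimensional subspace of $H$ preserve the absolute value of the bilinear form $\sum a_{ij}\langle u_i,v_j\rangle$, because the expression $\sum a_{ij}e^{-i\alpha_i}e^{i\beta_j}$ is already a candidate in the defining sup for $\mu(A)$. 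Once the four vectors lie in a one-dimensional subspace, $\omega$ and $\mu$ coincide trivially.

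Finally, given $\omega(A)=\mu(A)$, polarising the identity produces an explicit Hilbert-space realization: vectors $u_j$ and $y_i$ in some $H$ with $\langle y_i,u_j\rangle=a_{ij}$ and norms controlled so that the factorization $\phi=\phi_2\phi_1$, with $\phi_1(e_j)=u_j$ and $\phi_2(w)_i=\langle y_i,w\rangle$, satisfies $\|\phi_1\|\cdot\|\phi_2\|\leq\|\phi\|$. This yields $\gamma_2(\phi)\leq\|\phi\|$ and hence the HFP for the pair.

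The main obstacle is the phase-alignment step collapsing the Hilbert realization of $\omega(A)$ into a single one-dimensional subspace of $H$. This step genuinely requires the continuous unit circle of complex phases; over the reals it fails, as witnessed by the Hadamard matrix $A=\bigl(\begin{smallmatrix}1&1\\1&-1\end{smallmatrix}\bigr)$ for which $\omega(A)=2\sqrt 2$ while $\mu(A)=2$. Correspondingly, the real analogue of the proposition is false, in line with $K_G^{\R}(2,2)=\sqrt 2>1$.
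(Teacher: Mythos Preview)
Your overall strategy coincides with the paper's: both reduce the HFP for $(\ell^2_\infty(\C),\ell^2_1(\C))$ to the identity $\omega(A)=\mu(A)$ for every $A\in M_2(\C)$, i.e., to the fact that the complex Grothendieck constant $K_G^{\C}(2,2)$ equals~$1$. The paper simply quotes this from the literature (Tonge, Davie, Jameson) and then closes with the chain $\|\phi\|\leq\gamma_2(\phi)\leq\gamma_2^*(\phi)\leq\|\phi\|$, invoking the general inequality $\gamma_2\leq\gamma_2^*$ rather than your ``polarising'' construction. Your identification $\|\phi\|=\mu(A)$ and $\gamma_2^*(\phi)=\omega(A)$ is correct, and your remark that the real analogue fails (witnessed by the Hadamard matrix) matches the paper's subsequent discussion.

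The genuine gap in your plan is the ``phase-alignment'' step for $\omega(A)\leq\mu(A)$. You claim that one can project the four unit vectors $u_1,u_2,v_1,v_2\in H$ onto a single one-dimensional line while preserving $\bigl|\sum_{ij} a_{ij}\langle u_i,v_j\rangle\bigr|$, but this is exactly the content of $K_G^{\C}(2,2)=1$ and is not available for free. Projecting onto a line $\C e$ replaces $\langle u_i,v_j\rangle$ by $\overline{\langle e,u_i\rangle}\,\langle e,v_j\rangle$; the resulting scalar form is indeed a candidate in the sup defining $\mu(A)$, but there is no reason it should equal the \emph{original} bilinear form, since the components orthogonal to $e$ contribute as well. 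Your justification (``because the expression $\sum a_{ij}e^{-i\alpha_i}e^{i\beta_j}$ is already a candidate for $\mu(A)$'') only shows that the projected value is bounded by $\mu(A)$, which is the wrong direction. The published proofs of $K_G^{\C}(2,2)=1$ are short but substantive and do not proceed by such a projection; they exploit specific $2\times2$ structure (explicit parametrisations of unit vectors in $\C^2$, or an analysis of extreme matrices). So either cite the result, as the paper does, or supply one of those arguments in full.
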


\begin{proof}
Let $K_G(m,n)$ be the smallest constant $C$ such that the inequality $\gamma_2^*(\phi) \leq C \|\phi\|$ holds for every operator $\phi : \ell^m_\infty(\C) \to \ell^n_1(\C)$. Note that the supremum of $K_G(m,n)$ over integer $m, n$ is the usual Grothendieck constant. It is known (see~\cite{tonge1985complex,davie2006matrix, jameson84}) that $K_G(2,2)=1$ and therefore, when $m=n=2$, 
\[ \|\phi\| \leq \gamma_2(\phi) \leq \gamma_2^*(\phi) \leq \|\phi\|\]
as needed.
\end{proof}

In the real case, we conjecture that there is no non-trivial pair with the HFP.

\begin{conj} \label{conj:NTP-reals}
A pair $(X,Y)$ of real Banach spaces has the HFP if and only if $X$ or $Y$ is a Hilbert space.
\end{conj}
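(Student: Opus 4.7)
The easy direction is immediate: when $X$ or $Y$ is isometric to a Hilbert space, say $X$, the factorization of any $\phi : X \to Y$ through $X$ itself gives $\gamma_2(\phi) \leq \|\phi\|$, and the reverse inequality $\gamma_2(\phi) \geq \|\phi\|$ always holds, so the pair has HFP.

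For the converse direction, the plan is to reduce to a finite-dimensional $2$-dimensional obstruction and to exploit that, unlike in the complex setting used in the Proposition above (where $K_G(2,2) = 1$), the real Grothendieck constant for $2 \times 2$ matrices is strictly greater than $1$. First, by the parallelogram-law characterization, a Banach space is isometric to a Hilbert space if and only if each of its $2$-dimensional subspaces is. Since $X$ is not Hilbert, it contains a $2$-dim subspace $X_0$ whose unit disc is not an ellipse. Dually, $Y^*$ is Hilbert precisely when $Y$ is, so $Y^*$ contains a non-Euclidean $2$-dim subspace, which by duality produces a $2$-dim quotient $q : Y \twoheadrightarrow Y_1$ whose unit disc is not an ellipse.

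Next I would build a finite-dimensional witness $\phi_0 : X_0 \to Y_1$ satisfying $\gamma_2(\phi_0) > \|\phi_0\|$. The archetypal case $(X_0, Y_1) = (\ell^2_\infty(\R), \ell^2_1(\R))$ admits such witnesses precisely because the real $K_G(2,2) > 1$; for arbitrary non-Euclidean $2$-dim $X_0, Y_1$ in the real case, one would set up a compactness/continuity argument on the moduli space of planar centrally symmetric convex bodies, using that the equality $\gamma_2^* = \|\cdot\|_N$ on nuclear maps is a closed condition whose only extremal solutions are ellipsoidal. From $\phi_0$ the plan is to lift through $q$ to $\tilde\phi : X_0 \to Y$ (feasible up to arbitrarily small loss of norm since $Y_1$ is finite-dimensional) and then to extend along the inclusion $X_0 \hookrightarrow X$ to an operator $\phi : X \to Y$.

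The main obstacle is this extension step, since general Banach spaces $Y$ lack a vector-valued Hahn--Banach property with norm preservation. The natural workaround is to phrase everything on the dual side, using the trace-duality formula \eqref{eq:trace-duality-infdim}: failure of HFP for $(X,Y)$ is equivalent to the existence of a nuclear $\psi : Y \to X$ with $\gamma_2^*(\psi) > \|\psi\|_N$, and such a $\psi$ can be assembled by composing the quotient $q$, the $2$-dim witness (or its dual), and an inclusion into $X$, bypassing the norm-preserving extension problem. Verifying that the strict inequality survives this composition, so that the finite-dimensional obstruction really does transfer to the ambient pair, is where I expect the real difficulty to lie, and is presumably why the statement is posed as a conjecture rather than a theorem.
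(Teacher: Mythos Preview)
This statement is posed in the paper as a \emph{conjecture}, not a theorem, so there is no proof in the paper to compare against. What the paper does provide is a reduction to dimension two (reformulated as a geometric conjecture about planar convex bodies), together with partial evidence (the non--strictly convex case). Your proposal aims at the same reduction, and you correctly identify the genuine open point: the $2$-dimensional case. Your ``compactness/continuity argument on the moduli space of planar centrally symmetric convex bodies'' is not an argument but a hope; the paper does not know how to do this either, and simply records it as a second conjecture.

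There is, however, a concrete improvement available for your reduction step. You take a $2$-dimensional \emph{subspace} $X_0\subset X$ and a $2$-dimensional \emph{quotient} $Y_1$ of $Y$, and then struggle to promote a witness $\phi_0:X_0\to Y_1$ back to an operator $X\to Y$: the lifting through $q$ and the Hahn--Banach extension into $Y$ are both genuinely obstructed, and the dual workaround you sketch runs into the same issue in the mirror. The paper avoids this entirely by reversing the roles: it passes to a $2$-dimensional \emph{quotient} $X/F$ and a $2$-dimensional \emph{subspace} $E\subset Y$. In that direction one has a clean heredity statement: if $(X,Y)$ has the HFP then so does $(X/F,E)$, because for any $\phi:X/F\to E$ the composite $j\phi q:X\to Y$ (with $j:E\hookrightarrow Y$ and $q:X\twoheadrightarrow X/F$) satisfies $\gamma_2(j\phi q)=\gamma_2(\phi)$ by the injectivity/projectivity of the $\gamma_2$ ideal, while $\|j\phi q\|\le\|\phi\|$ trivially. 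Since a Banach space is Euclidean iff all its $2$-dimensional quotients are (and iff all its $2$-dimensional subspaces are), this choice of directions reduces the conjecture to dimension two with no lifting or extension needed. Your instinct that the obstacle is the promotion step is right, but the fix is structural rather than technical: swap which space you take a subspace of and which you take a quotient of.
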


The pair $(\ell_{\iy}^2(\C),\ell_1^2(\C))$, when these spaces are seen as normed spaces over the reals, fails the HFP. This fact, which answers a question from \cite{paperB}), can be seen using the following proposition.

\begin{prop}
If $X$ and $Y$ are non-strictly convex real Banach spaces, then the pair $(X,Y)$ fails the HFP.
\end{prop}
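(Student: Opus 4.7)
The plan is to construct an explicit rank-two operator $\phi : X \to Y$ with $\gamma_2(\phi) > \|\phi\|$, leveraging the strict inequality $K_G^{\R} > 1$ for the real Grothendieck constant. This is the reason why the pair $(\ell_\infty^2(\R),\ell_1^2(\R))$ itself fails the HFP, as witnessed for instance by the scaled Hadamard map $\Theta:\ell_\infty^2\to\ell_1^2$, $\Theta(a,b)=((a+b)/2,(a-b)/2)$, which satisfies $\|\Theta\|=1$ but $\gamma_2(\Theta)\geq 2/K_G^{\R}>1$.

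First, I would extract the geometric data from non-strict convexity. Pick distinct unit vectors $x_1,x_2\in X$ with $\|(x_1+x_2)/2\|=1$ and $y_1,y_2\in Y$ with $\|(y_1+y_2)/2\|=1$. Hahn-Banach yields norm-one functionals $f,h\in X^{*}$ with $f(x_1)=f(x_2)=1$ and $h(x_1)=-h(x_2)=\mu_X/2$ for some $\mu_X\in(0,2]$ (obtained by extending the natural candidate on $\operatorname{span}(x_1,x_2)$ separating $x_1$ from $x_2$), and analogously $g,k\in Y^{*}$ with parameter $\mu_Y\in(0,2]$. These data give contractions $R=(f,h):X\to\ell_\infty^2$ and $L:\ell_1^2\to Y$, $L(a,b)=ay_1+by_2$ (the latter being a contraction by the triangle inequality, and isometric on the positive quadrant by the flat-face identity $\|y_1+y_2\|=2$), plus their analogues $R':Y\to\ell_\infty^2$ and $L':\ell_1^2\to X$.

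Set $\phi:=L\,\Theta\,R$ and $\psi:=L'\,\Theta\,R'$. The metric mapping property gives $\|\phi\|,\|\psi\|\leq 1$, and since $\phi(x_1)=\tfrac{1+\mu_X/2}{2}y_1+\tfrac{1-\mu_X/2}{2}y_2$ has norm one by the flat-face identity in $Y$, in fact $\|\phi\|=1$. A direct rank-two computation using $f(x_j)=1$, $h(x_j)=\pm\mu_X/2$ and their $Y$-analogues yields
\[
\tr(\psi\phi) \;=\; 1+\frac{\mu_X\mu_Y}{4},
\]
while Grothendieck's theorem combined with the operator-ideal property gives $\gamma_2^{*}(\psi)\leq\gamma_2^{*}(\Theta)\leq K_G^{\R}$. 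Hence, by the trace duality~\eqref{eq:trace-duality-infdim},
\[
\gamma_2(\phi) \;\geq\; \frac{|\tr(\psi\phi)|}{\gamma_2^{*}(\psi)} \;\geq\; \frac{1+\mu_X\mu_Y/4}{K_G^{\R}},
\]
which strictly exceeds $\|\phi\|=1$ whenever $\mu_X\mu_Y>4(K_G^{\R}-1)$.

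The main obstacle is the case of \emph{thin} flat faces, where $\mu_X\mu_Y\leq 4(K_G^{\R}-1)$ and the basic $2\times 2$ construction does not directly deliver a gap. To overcome this, one replaces $\Theta$ by a higher-dimensional Hadamard-type operator $\Theta_n:\ell_\infty^n\to\ell_1^n$ (for which the strict gap $\gamma_2(\Theta_n)/\|\Theta_n\|>1$ persists for every $n\geq 2$ in the real case), and embeds it into $X,Y$ by using the entire flat segment $\{tx_1+(1-t)x_2:t\in[0,1]\}$ on the unit sphere of $X$ (and its $Y$-analogue) rather than just the endpoints. By choosing $n$ sufficiently large, the amplified construction produces the strict gap $\gamma_2(\phi)>\|\phi\|$ in full generality.
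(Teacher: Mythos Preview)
Your operator $\phi=L\Theta R$ is precisely the operator the paper constructs (with your $h$ playing the role of its $g$): unwinding gives $\phi(x)=\tfrac{f(x)+h(x)}{2}y_1+\tfrac{f(x)-h(x)}{2}y_2$, and your verification that $\|\phi\|=1$ is the same real identity $|\tfrac{a+b}{2}|+|\tfrac{a-b}{2}|=\max(|a|,|b|)$. The divergence is in how you bound $\gamma_2(\phi)$ from below.

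Your trace-duality argument with the test operator $\psi=L'\Theta R'$ is correct as far as it goes (and one can sharpen your constant: in fact $\gamma_2^*(\Theta)=\sqrt{2}$, since $\Theta$ is an isometric isomorphism $\ell_\infty^2(\R)\to\ell_1^2(\R)$, so $\gamma_2^*(\psi)\le\sqrt 2$). But even with the sharp constant, the bound $\gamma_2(\phi)\ge(1+\mu_X\mu_Y/4)/\sqrt 2$ only exceeds $1$ when $\mu_X\mu_Y>4(\sqrt 2-1)$, and the genuine gap is your proposed remedy for thin faces. Replacing $\Theta$ by $\Theta_n:\ell_\infty^n\to\ell_1^n$ and ``using the entire flat segment'' cannot help: all points $tx_1+(1-t)x_2$ lie in the $2$-dimensional subspace $\mathrm{span}(x_1,x_2)$, and all the functionals you can build from the single face are combinations of $f$ and $h$. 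Hence any $L_n:\ell_1^n\to Y$ and $R_n:X\to\ell_\infty^n$ obtained this way factor through $2$-dimensional spaces, so $L_n\Theta_nR_n$ has rank at most $2$ and the problem collapses back to the $2\times 2$ situation with the same $\mu_X,\mu_Y$. No amplification occurs.

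The paper avoids this entirely by bounding $\gamma_2(\phi)$ from below \emph{directly} via the parallelogram identity in the intermediate Hilbert space. For any factorization $\phi=\beta\alpha$ through $H$, one has $\|\alpha(x_1+x_2)\|\ge 2/\|\beta\|$ (since $\|\phi(x_1+x_2)\|=2$) and $\|\alpha(x_1-x_2)\|\ge\e/\|\beta\|$ where $\e=\|\phi(x_1-x_2)\|>0$; the parallelogram law then forces $\|\alpha\|^2\|\beta\|^2\ge 1+\e^2/4$. This gives $\gamma_2(\phi)\ge\sqrt{1+\e^2/4}>1$ uniformly, with no case distinction on the thickness of the faces. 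You had the right operator; the missing idea is this elementary Hilbert-space estimate in place of the Grothendieck route.
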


\begin{proof}
There exist vectors $x_1 \neq x_2$ in $B_X$ and $y_1 \neq y_2$ in $B_Y$ such that
\[ \|\lambda x_1 + (1-\lambda)x_2 \| = \|\lambda y_1 + (1-\lambda)y_2 \| =1 ,\]
for every $0 \leq \lambda \leq 1$. Consider $f,g \in B_{X^*}$ such that $f(\frac{x_1+x_2}{2})=1$ and $g(x_1-x_2)\neq 0$. In particular, we have $f(x_1)=f(x_2)=1$. We define an operator $\phi : X \to Y$ by
\[ \phi(x) = \frac{f(x)+g(x)}{2}y_1 + \frac{f(x)-g(x)}{2}y_2.\]
It is easy to verify that $\|\phi(x_1+x_2)\|=2$. For any $x \in B_X$, we have
\begin{equation} \label{eq:nonstrictlyconvex} \|\phi(x)\| \leq \left| \frac{f(x)+g(x)}{2} \right| + \left| \frac{f(x)-g(x)}{2} \right| = \max(|f(x)|,|g(x)|) \leq 1\end{equation}
and thus $\|\phi\|=1$. Let $\e = \|\phi(x_1-x_2)\|>0$ and consider a factorization $\phi = \beta \alpha$ with a Hilbert space $H$ and operators $\alpha : X \to H$ and $\beta : H \to Y$.  We have, using the parallelogram identity in $H$,
\begin{eqnarray*}
\e^2 & \leq & \|\beta\|^2 \|\alpha(x_1-x_2)\|^2 \\
     & = & \|\beta\|^2 \left( 2 \|\alpha(x_1)\|^2 + 2\|\alpha(x_2)\|^2 - \|\alpha(x_1+x_2) \|^2 \right) \\
     & \leq & \|\beta\|^2 \left( 2\|\alpha\|^2 + 2\|\alpha\|^2 - 4 \|\beta\|^{-2} \right).
\end{eqnarray*}
Taking the infimum over factorizations gives $\gamma_2(\phi) \geq \sqrt{1+\e^2/4} > \|\phi\|$.
\end{proof}

We point out that the previous proof fails in the complex case as the equality in \eqref{eq:nonstrictlyconvex} is no longer valid. Our next goal is to reduce Conjecture \ref{conj:NTP-reals} to a question about $2$-dimensional spaces. We first prove a heredity property for the HFP.

\begin{prop} \label{prop:NTP-strong-inheritance}
Assume that $X$, $Y$ are Banach spaces such that the pair $(X,Y)$ has the HFP. Then, for every subspace $E$ of $Y$ and any quotient $X/F$ of $X$, the pair $(X/F,E)$ has the HFP.
\end{prop}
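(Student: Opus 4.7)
The strategy is to reduce the HFP of $(X/F,E)$ to the HFP of $(X,Y)$ via lifting and extending. Let $\pi_F:X\to X/F$ denote the canonical quotient map and $i_E:E\hookrightarrow Y$ the inclusion. To any bounded $\psi:X/F\to E$ I associate
\[ \tilde\psi := i_E\,\psi\,\pi_F : X\to Y. \]
Because $i_E$ is isometric and $\pi_F$ is a metric quotient, a direct computation yields $\|\tilde\psi\|=\|\psi\|$. Applying the HFP hypothesis on $(X,Y)$ gives $\gamma_2(\tilde\psi)=\|\psi\|$. Since the bound $\gamma_2(\psi)\geq\|\psi\|$ is automatic, the proposition reduces to proving the inequality $\gamma_2(\psi)\leq\gamma_2(\tilde\psi)$.

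To establish this inequality, I would start from an arbitrary Hilbert factorization $\tilde\psi=\phi_2\phi_1$, with $\phi_1:X\to H$ and $\phi_2:H\to Y$, and modify it into a factorization of $\psi:X/F\to E$ whose norm product is no larger. Two surgeries are needed, one for each side. For the range side, replace $H$ by the Hilbert subspace $H_1:=\overline{\phi_1(X)}$; since $\phi_2(H_1)\subseteq\overline{\tilde\psi(X)}\subseteq E$ (using that $E$ is closed), the restriction $\phi_2|_{H_1}$ is naturally an operator into $E$, of norm at most $\|\phi_2\|$. For the domain side, set $K:=\overline{\phi_1(F)}\subseteq H_1$. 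For every $f\in F$ we have $\phi_2\phi_1(f)=\tilde\psi(f)=0$, and continuity of $\phi_2$ forces $\phi_2|_K\equiv 0$. Letting $Q$ be the orthogonal projection of $H_1$ onto $K^\perp$, one gets $\phi_2|_{H_1}\phi_1=\phi_2|_{H_1}Q\phi_1$, and since $Q\phi_1$ annihilates $F$, it descends to $\phi_1':X/F\to H_1$ with $\|\phi_1'\|\leq\|\phi_1\|$. Combining the two adjustments, $\psi=(\phi_2|_{H_1})\,\phi_1'$ is the required Hilbert factorization of $\psi$ through $H_1$, with norm product at most $\|\phi_1\|\|\phi_2\|$. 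Taking the infimum over all original factorizations yields $\gamma_2(\psi)\leq\gamma_2(\tilde\psi)$.

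Conceptually, the argument is nothing more than the well-known fact that the Hilbert space factorization norm $\gamma_2$ is both injective (with respect to isometric subspaces of the range) and projective (with respect to metric quotients of the domain), applied simultaneously to the two sides of $\tilde\psi$. The only delicate point is the step $\phi_2|_K=0$: one must pass from $\phi_2$ vanishing on $\phi_1(F)$ to $\phi_2$ vanishing on the closure $K$, which is where continuity of $\phi_2$ enters and where the orthogonal projection $Q$ becomes available. Beyond this, no real obstacle is expected; in particular, because $i_E$ is isometric and $\pi_F$ is a metric quotient, the isometric (rather than merely isomorphic) constants are preserved throughout.
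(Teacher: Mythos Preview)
Your proof is correct and follows the same overall strategy as the paper: compose $\psi$ with the inclusion $i_E$ and the quotient map $\pi_F$, apply the HFP of $(X,Y)$ to $\tilde\psi=i_E\psi\pi_F$, and then use that $\gamma_2(\psi)\leq\gamma_2(\tilde\psi)$. The paper obtains this last inequality (in fact, equality) by citing \cite[Proposition~7.3]{DJT95}, which records the injectivity/projectivity of $\gamma_2$, whereas you unpack that reference and prove the inequality directly via the factorization surgery; both arguments are essentially the same.
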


\begin{proof}
Consider a linear operator $\phi:X/F\ra E$. Denoting by $j:E\ra Y$ the inclusion and by $q:X\ra X/F$ the quotient map, we have $j\phi q:X\ra Y$ and $\gamma_2(j\phi q)=\|j\phi q\|$. By~\cite[Proposition 7.3]{DJT95} we find that 
\[
\gamma_2(\phi) =\gamma_2(j\phi q)=\|j\phi q\|\leq \|\phi\| .
\]
The other inequality is clear.
\end{proof}

It is a classical fact (see~\cite{Amir86}) that a normed space $X$ is Euclidean if and only if all its $2$-dimensional subspaces are Euclidean, and also if and only if all its $2$-dimensional quotients are Euclidean. Using Proposition \ref{prop:NTP-strong-inheritance}, this shows that in order to prove Conjecture \ref{conj:NTP-reals} it suffices to consider the case $\dim X = \dim Y =~2$. This allows to formulate a concrete geometric question equivalent to Conjecture~\ref{conj:NTP-reals}.

\begin{conj} \label{conj:dim2}
Let $K$ and $L$ be centrally symmetric convex bodies in $\R^2$. Assume that for every linear map $T$ such that $T(K) \subset L$, there is an ellipse $\mathcal{E} \subset \R^2$ such that $T(K) \subset \mathcal{E} \subset L$. Then $K$ or $L$ is an ellipse.
\end{conj}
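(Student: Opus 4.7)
The plan is to attempt the contrapositive: supposing neither $K$ nor $L$ is an ellipse, exhibit a linear map $T:\R^2\to\R^2$ with $T(K)\subset L$ for which no ellipse $\mathcal{E}$ satisfies $T(K)\subset\mathcal{E}\subset L$.

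A first, easy sub-case handles the situation when $K$ and $L$ are linearly equivalent, i.e., there is an invertible linear map $T_0$ with $T_0(K)=L$. Any ellipse $\mathcal{E}$ satisfying $T_0(K)\subset\mathcal{E}\subset L$ must then coincide with $L$, contradicting the assumption that $L$ is not an ellipse. Hence one may assume henceforth that $K$ and $L$ are not linearly equivalent.

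For the remaining case, I would recast the existence of a sandwiching ellipse as a semidefinite feasibility problem expressed through support functions. Parameterizing centered ellipses by positive definite $2\times 2$ matrices as $\mathcal{E}_N=\{x\in\R^2:x^TN^{-1}x\leq 1\}$, the condition $T(K)\subset\mathcal{E}_N\subset L$ is equivalent to
\[
h_K(T^*\xi)^2 \;\leq\; \xi^T N \xi \;\leq\; h_L(\xi)^2 \qquad \text{for every } \xi\in\R^2,
\]
where $h_K$ and $h_L$ are the support functions of $K$ and $L$. Feasibility is then a question about a slice of the three-dimensional cone of $2\times 2$ PSD matrices, and by standard convex (Lagrangian) duality its failure is witnessed by two finite positive measures $\mu_+,\mu_-$ on the unit circle $S^1$ such that $\int \xi\xi^T\,d\mu_+ = \int \xi\xi^T\,d\mu_-$ while $\int h_L(\xi)^2\,d\mu_+ < \int h_K(T^*\xi)^2\,d\mu_-$. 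The problem thus reduces to constructing, from the non-ellipticity of $K$ and $L$, both a linear map $T$ and such a separating pair of measures.

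The main obstacle is that non-ellipticity of $K$ and of $L$ can manifest in very different ways (polygonal facets, smooth but non-quadratic curvature profiles, or mixtures of both), and the construction must align the defect of $h_K^2$ from being a quadratic form with the defect of $h_L^2$ through a single choice of $T$. I would first attack the purely polygonal case, where the contact structure is discrete and the extremal dual measures $\mu_\pm$ become atomic, supported at outward normals of edges and vertices; then try to extend to the general case by an approximation argument that quantifies each body's deviation from being an ellipse (for example, by a modulus of the failure of $h_K^2$ and $h_L^2$ to be quadratic forms) and passes to the limit in the dual certificate.
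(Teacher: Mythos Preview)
The statement you are attempting to prove is labelled as a \emph{conjecture} in the paper; the paper gives no proof of it and presents it as an open problem (equivalent, via the preceding reduction, to Conjecture~5.3 on the Hilbert space factorization property for real Banach spaces). So there is no proof in the paper to compare against.

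As for the content of your proposal: the first sub-case is correct and is a genuine, if modest, observation---if $K$ and $L$ are linearly equivalent via $T_0$, then $T_0(K)=L$ forces any sandwiched ellipse to equal $L$, contradicting the hypothesis that $L$ is not an ellipse. This sub-case is not mentioned in the paper.

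The rest of your proposal, however, is not a proof but a reformulation followed by a roadmap. The support-function rewriting and the Farkas-type dual certificate (measures $\mu_\pm$ with matching second-moment matrices) are correct translations of the feasibility question, but they do not by themselves produce the map $T$ or the separating measures. You explicitly acknowledge this: the sentence beginning ``The main obstacle is\ldots'' identifies exactly the gap, and ``I would first attack the purely polygonal case\ldots\ then try to extend\ldots\ by an approximation argument'' is a plan, not an argument. In particular, even in the polygonal case you have not shown how to choose $T$ so that the resulting semidefinite program is infeasible, nor why the atomic dual measures you allude to must exist. The approximation step is also delicate: a limit of infeasible problems can become feasible, so passing from polygons to general bodies would require a quantitative infeasibility margin that survives the limit, and you have not indicated how to obtain one. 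In short, the hard part of the conjecture remains entirely untouched.
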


\section*{Acknowledgments}

We thank Roy Araiza and Mikael de la Salle for helpful discussions. The authors acknowledge support of the Institut Henri Poincaré (UAR 839 CNRS-Sorbonne Université), and LabEx CARMIN (ANR-10-LABX-59-01). GA was supported in part by ANR (France) under the grant ESQuisses (ANR-20-CE47-0014-01). AMH acknowledges funding from The Research Council of Norway (project 324944). 

\appendix

\section{The random tensors argument}

A random variable $g$, with values in a finite-dimensional Hilbert space $H$, is said to be a \emph{standard Gaussian vector} if for any orthonormal basis $(e_i)$ of $H$ the random variables $\scalar{g}{e_i}$ are independent with a standard normal distribution. We use the following observation.

\begin{lem} \label{lemma:random-tensor-HS}
Let $g$ be a standard Gaussian vector in a finite-dimensional Hilbert space $H$ and $\phi : H \to H$. Then, we have
\[ \E \|\phi(g)\|^2 = \mathrm{hs}(\phi)^2 \]
and
\[ \E \|\phi(g)\| \geq \frac{1}{\sqrt{3}} \mathrm{hs}(\phi). \]
\end{lem}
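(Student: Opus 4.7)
For the first identity, I will expand $g$ in an orthonormal basis $(e_i)$ of $H$, writing $g = \sum_i g_i e_i$ with the $g_i$ independent standard normal by definition of a standard Gaussian vector. Then $\phi(g) = \sum_i g_i \phi(e_i)$ and hence
\[
\|\phi(g)\|^2 \;=\; \sum_{i,j} g_i g_j \langle \phi(e_i),\phi(e_j)\rangle.
\]
Taking expectations and using $\E[g_i g_j] = \delta_{ij}$ leaves $\sum_i \|\phi(e_i)\|^2 = \mathrm{hs}(\phi)^2$, which is the first claim.

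For the second identity, set $X = \|\phi(g)\|$ and $s = \mathrm{hs}(\phi)$, so that $\E X^2 = s^2$ by the first part. The plan is to obtain a matching upper bound on $\E X^4$ and then interpolate. Let me diagonalize the positive operator $\phi^*\phi = \sum_i \lambda_i \ketbra{f_i}$ in an orthonormal basis $(f_i)$; since a standard Gaussian vector has a rotation-invariant distribution, I may compute in this basis and write $X^2 = \sum_i \lambda_i Z_i^2$ with $Z_i$ i.i.d.\ standard normal. Using $\E[Z_i^2] = 1$ and $\E[Z_i^4] = 3$, expansion of the square yields
\[
\E X^4 \;=\; \sum_{i \neq j} \lambda_i \lambda_j + 3 \sum_i \lambda_i^2 \;=\; \Bigl(\sum_i \lambda_i\Bigr)^2 + 2 \sum_i \lambda_i^2 \;\leq\; 3\Bigl(\sum_i \lambda_i\Bigr)^2 \;=\; 3\,(\E X^2)^2.
\]

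The final step is a standard Lyapunov-type interpolation. Two applications of Cauchy--Schwarz give $(\E X^2)^2 \leq \E X \cdot \E X^3$ and $\E X^3 \leq \sqrt{\E X^2 \cdot \E X^4}$; combining these with the bound just established produces
\[
\E X \;\geq\; \frac{(\E X^2)^2}{\E X^3} \;\geq\; \frac{(\E X^2)^2}{\sqrt{\E X^2 \cdot \E X^4}} \;\geq\; \frac{(\E X^2)^2}{\sqrt{3}\,(\E X^2)^{3/2}} \;=\; \frac{1}{\sqrt{3}}\,s,
\]
which is the desired lower bound. There is no real obstacle here: the only nontrivial input is the Gaussian fourth-moment computation, and the rest is moment interpolation.
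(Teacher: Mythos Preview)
Your proof is correct and follows essentially the same approach as the paper: diagonalize (the paper uses the SVD of $\phi$, you diagonalize $\phi^*\phi$, which is equivalent), compute the second and fourth moments of $\|\phi(g)\|$, and then apply the moment inequality $\E|Z| \geq (\E|Z|^2)^{3/2}/(\E|Z|^4)^{1/2}$, which the paper invokes directly via H\"older and you derive from two applications of Cauchy--Schwarz.
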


\begin{proof}
By the singular value decomposition, we may assume that $\phi(e_i)=\lambda_i e_i$ for some orthonormal basis $\lset e_i\rset$ of $H$ and with all $\lambda_i$ non-negative. Let $g_i=\scalar{g}{e_i}$ for every $i$, and observe that $\E |g_i|^2 |g_j|^2$ is at most $3$. We compute
\[ \E \|\phi(g)\|^2 = \E \sum \lambda_i^2  |g_i|^2 = \sum \lambda_i^2 = \mathrm{hs}(\phi)^2\]
and
\[ \E \|\phi(g)\|^4 = \E \sum_{i,j} \lambda_i^2\lambda_j^2 |g_i|^2|g_j|^2 \leq 3 \sum_{i,j} \lambda_i^2\lambda_j^2 = 3\cdot\mathrm{hs}(\phi)^4.\]
We conclude by using H\"older's inequality in the form $\E |Z| \geq \frac{(\E |Z|^2)^{3/2}}{(\E |Z|^4)^{1/2}}$ for the random variable $Z=\|\phi(g)\|$.
\end{proof}

For any $k \geq 2$, we choose $g_k$ to be a standard Gaussian vector in the Hilbert space $H^{\otimes_h k}$. We have
\begin{equation} \label{eq:randomtensor1}
\mathbf{E}\|g_k\|_{\e} \leq C_n \sqrt{k\log(k)} ,
\end{equation}
by a net argument (see \cite[Lemma 4.3]{paperB}, which is stated in the spherical rather than Gaussian normalization) and where $C_n$ is a constant depending only on $n = \dim H$. In the real case, the Chevet inequality would give $\mathbf{E} \|g_k\|_\e \leq k\sqrt{n}$ which is good enough for our purposes. By Lemma \ref{lemma:random-tensor-HS}, we have
\begin{equation} \label{eq:randomtensor2}
\mathbf{E}\|\phi^{\otimes k}(g_k)\| \geq \frac{1}{\sqrt{3}} \mathrm{hs}(\phi^{\otimes k}) = \frac{1}{\sqrt{3}} \mathrm{hs}(\phi)^k .
\end{equation}
By linearity of expectation, it follows from \eqref{eq:randomtensor1} and \eqref{eq:randomtensor2} that there exists $z_k \in H^{\otimes k}$ such that
\[
\|z_k\|_{\epsilon}\leq C_n\sqrt{k\log(k)} ,
\]
and 
\[
\|\phi^{\otimes k}(z_k)\| \geq \frac{1}{\sqrt{3}} \mathrm{hs}(\phi)^{k} .
\]

\bibliographystyle{alpha}
\bibliography{biblio.bib}

\end{document}